\newtheorem{theorem}{Theorem}[section]
\newtheorem*{theorem*}{Theorem}
\newtheorem{proposition}[theorem]{Proposition}
\newtheorem*{proposition*}{Proposition}
\newtheorem{lemma}[theorem]{Lemma}
\newtheorem*{lemma*}{Lemma}
\newtheorem{corollary}[theorem]{Corollary}
\newtheorem*{corollary*}{Corollary}
\newtheorem{conjecture}{Conjecture}
\newtheorem*{conjecture*}{Conjecture}
\numberwithin{equation}{section}
\renewcommand{\H}{\mathbb{H}}
\newcommand{\Z}{\mathbb{Z}}
\newcommand{\pmat}[1]{\begin{pmatrix} #1 \end{pmatrix}}
\newcommand{\abs}[1]{\left\vert #1 \right\vert}
\DeclareMathOperator{\maxN}{maxN}
\title[Bounds for Coefficients of the $f(q)$ Mock Theta Function]{Bounds for Coefficients of the $f(q)$ Mock Theta Function and Applications to Partition Ranks}
\author{Kevin Gomez}
\address{Vanderbilt University\\
  Nashville, Tennessee 37235}
\email{kevin.j.gomez@vanderbilt.edu}
\author{Eric Zhu}
\address{Georgia Institute of Technology\\
  Atlanta, Georgia 30332}
\email{ezhu31@gatech.edu}
\begin{document}

\maketitle

\begin{abstract}
    We compute effective bounds for $\alpha(n)$, the Fourier coefficients of Ramanujan's mock theta function $f(q)$ utilizing a finite algebraic formula due to Bruinier and Schwagenscheidt. We then use these bounds to prove two conjectures of Hou and Jagadeesan on the convexity and maximal multiplicative properties of the even and odd partition rank counting functions.
\end{abstract}

\section{Introduction and Statement of Results} \label{sec:intro}
For a nonnegative integer $n$, a partition of $n$ is a finite list of nondecreasing positive integers $\lambda = (\lambda_1, \lambda_2, \dots, \lambda_k)$ such that $\lambda_1 + \lambda_2 + \cdots + \lambda_k = n$. The partition number $p(n)$ denotes the number of partitions of $n$ which has been of large interest to number theorists. 

Given a partition $\lambda$ of $n$, the rank of $\lambda$ is defined as $\lambda_k - k$. In words, this is the largest part of the partition minus the number of parts. For any $n$, we can consider $N(r,t;n)$ which counts the number of partitions of $n$ that have rank equal to $r \pmod{t}$.

For the case of $t = 2$, we analyze partitions with even or odd rank, captured by the coefficients $\alpha(n)$ of Ramanujan's mock theta function
\begin{equation*}
    \begin{split}
        f(q) &:= 1 + \sum_{n=0}^{\infty} \frac{q^{n^2}}{(1 + q)^2(1 + q^2)^2 \dots (1 + q^n)^2} \\
        &= 1 + \sum_{n=0}^{\infty} \alpha(n)q^n
    \end{split}
\end{equation*}
for $q := e^{2\pi i z}$, where $\alpha(n) = N(0,2;n) - N(1,2;n)$.

In this paper, we will prove the following asymptotic formula for $\alpha(n)$ with an effective bound on the error term:
\begin{theorem} \label{thm:main}
    Let $D_n := -24n+1$ and $l(n) := \pi\sqrt{\abs{D_n}}/6$. Then for all $n \geq 1$,
    \begin{equation*}
        \alpha(n) = (-1)^{n+1} \frac{\sqrt{6}}{\sqrt{24n-1}} e^{l(n)/2} + E(n)
    \end{equation*}
    where
    \begin{equation*}
        \abs{E(n)} < (4.30 \times 10^{23}) 2^{q(n)} \abs{D_n}^2 e^{l(n)/3}
    \end{equation*}
    with
    \begin{equation*}
        q(n) := \frac{\log(\abs{D_n})}{\abs{\log \log(\abs{D_n}) - 1.1714}}.
    \end{equation*}
\end{theorem}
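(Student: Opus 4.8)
The plan is to take as the starting point the Bruinier--Schwagenscheidt finite algebraic formula, which rewrites $\alpha(n)$ as a finite sum of CM values of a single fixed weight-$0$ weakly holomorphic modular function. Such a formula has the shape
\[
\alpha(n) = \frac{1}{\sqrt{\abs{D_n}}} \sum_{Q} \chi(Q)\, F(\tau_Q),
\]
where $Q = [a,b,c]$ runs over representatives for the $\Gamma_0(6)$-equivalence classes of binary quadratic forms of discriminant $D_n$ satisfying the relevant Heegner condition, $\chi$ is a genus character, $\tau_Q = \tfrac{-b+\sqrt{D_n}}{2a}$ is the associated Heegner point with $\mathrm{Im}\,\tau_Q = \sqrt{\abs{D_n}}/(2a)$, and $F$ has a $q$-expansion $F = \sum_{m \ge -\rho} c(m)\, q^m$ with a pole only at the cusp $\infty$. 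The first step is to record this identity explicitly, together with the pole order $\rho$ of $F$, its leading coefficient $c(-\rho)$, and the list of reduced forms.

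Next I would extract the main term. The dominant contribution comes from the essentially unique Heegner point $\tau_{Q_0}$ of maximal imaginary part --- the principal form, with the smallest admissible $a = a_0$ --- paired with the most singular term $c(-\rho)q^{-\rho}$ of $F$. Since $\abs{e^{-2\pi i \rho\, \tau_{Q_0}}} = e^{2\pi \rho\, \mathrm{Im}\,\tau_{Q_0}} = e^{\pi \rho \sqrt{\abs{D_n}}/a_0}$, matching this against $e^{l(n)/2} = e^{\pi\sqrt{\abs{D_n}}/12}$ forces $\rho/a_0 = 1/12$ (the pole order $\rho$ being fractional, as expected from the weight-$1/2$ multiplier). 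The remaining data then determine the closed form of the main term: the overall normalization constant, reflecting the level $N = 6$, produces the factor $\sqrt 6$, while the genus character value $\chi(Q_0)$ together with the phase $e^{-2\pi i \rho\, \mathrm{Re}\,\tau_{Q_0}}$ of the leading term produces the sign $(-1)^{n+1}$. This yields exactly $(-1)^{n+1}\frac{\sqrt 6}{\sqrt{24n-1}}e^{l(n)/2}$.

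The error $E(n)$ collects everything else, and bounding it effectively is the heart of the matter. Three sources must be controlled. First, the subleading terms $c(m)q^m$ with $m > -\rho$ in the expansion of $F$ at $\tau_{Q_0}$; their aggregate size is governed by the growth of the coefficients $c(m)$, for which I would invoke an explicit, Rademacher- or circle-method-type bound for the Fourier coefficients of the weakly holomorphic form $F$ in terms of its principal part. Second, the contributions of every non-principal form, whose Heegner points have strictly smaller imaginary part $\sqrt{\abs{D_n}}/(2a)$ with $a > a_0$; here one verifies that the resulting exponentials are all at most $e^{l(n)/3}$. Third, the number of forms, namely the class number $h(D_n)$: the exponent $q(n) = \log\abs{D_n}/\abs{\log\log\abs{D_n} - 1.1714}$ is precisely an effective version of the maximal order of the divisor function, so that the number of relevant forms does not exceed $2^{q(n)}$.

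Assembling these pieces bounds $E(n)$ by $2^{q(n)}$ times a sum of exponentials each at most $e^{l(n)/3}$, with the polynomial factor $\abs{D_n}^2$ arising from the coefficient bound and the attendant geometric-series estimates, and the large absolute constant $4.30 \times 10^{23}$ arising from making every implied constant fully explicit. I expect the main obstacle to be precisely this effectivity: producing an explicit bound on the coefficients $c(m)$ of $F$, and then checking that after multiplication by the class-number bound $2^{q(n)}$ and the polynomial factors the second-largest exponential genuinely remains below $e^{l(n)/3}$. This demands careful bookkeeping of constants at every stage rather than any new structural idea.
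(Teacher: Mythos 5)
There is a genuine gap, and it is structural: you have misidentified the source of the main term. In the Bruinier--Schwagenscheidt formula the relevant form is the weight-zero function $F$ with integral $q$-expansion $F = q^{-1} - 4 - 83q - \cdots$, so its pole order is exactly $1$; there is no fractional pole order, and your device of forcing $\rho/a_0 = 1/12$ ``as expected from the weight-$1/2$ multiplier'' is reverse-engineered and false. Since the Heegner forms satisfy $6 \mid a$, the point of maximal imaginary part has $a = 6$, and its contribution has magnitude $e^{2\pi\,\mathrm{Im}\,\tau_Q} = e^{\pi\sqrt{\abs{D_n}}/6} = e^{l(n)}$ --- the size of $p(n)$, not of $\alpha(n)$. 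The actual mechanism, which your plan cannot see, is exact cancellation: after passing to reduced level-one forms via coset representatives and expanding $F$ at all four cusps of $\Gamma_0(6)$ (note that $F$ is an Atkin--Lehner eigenform with eigenvalues $\pm 1$ and hence has poles at \emph{every} cusp, so your premise that $F$ is singular only at $\infty$ is also wrong), the four classes with $a_Q h_Q = 6$ carry roots of unity summing to zero, annihilating all $e^{l(n)}$-size terms. The main term then emerges from the \emph{eight} classes with $a_Q h_Q = 12$, whose roots of unity sum to $(-1)^n i\sqrt{6}$ according to the parity of $n$; applying $\alpha(n) = -\mathrm{Im}(S(n))/\sqrt{\abs{D_n}}$ (the formula takes an imaginary part of a plain trace --- there is no genus character $\chi$, another misremembering in your setup) produces $(-1)^{n+1}\sqrt{6}\,(24n-1)^{-1/2}e^{l(n)/2}$. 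Neither the constant $\sqrt{6}$ nor the sign $(-1)^{n+1}$ can be extracted from a single principal form as you propose.

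The error analysis as planned also fails to close. The classes with $a_Q h_Q = 12$ contribute at magnitude $e^{l(n)/2}$, i.e.\ at main-term scale, so your step ``verify that every non-principal form is at most $e^{l(n)/3}$'' is false for them: they must be evaluated exactly, and only the classes with $a_Q h_Q \geq 18$ may be discarded into the error, which is precisely how the bound $e^{\pi\sqrt{\abs{D_n}}/18} = e^{l(n)/3}$ arises. Your claim that the number of relevant forms is at most $2^{q(n)}$ is likewise untenable: the class number $H(D_n)$ grows roughly like $\abs{D_n}^{1/2}$, far exceeding the subpolynomial quantity $2^{q(n)}$; in the correct accounting the product $2^{q(n)}\abs{D_n}^2$ jointly absorbs an effective bound for $H(D_n)$, with the divisor-type exponent $q(n)$ entering through the class number estimate rather than counting forms. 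Finally, controlling the nonprincipal Fourier terms requires the explicit expansion of $F$ at each cusp together with an effective coefficient bound $\abs{a(m)} \leq C e^{4\pi\sqrt{m}}$ (Proposition \ref{prop:F-fourier} and Lemma \ref{lem:F-fourier-bound}, obtained by identifying $F$ with a Poincar\'e series), as well as the decomposition of the full trace into primitive traces over $u^2 \mid D_n$; your outline gestures at the first of these but, absent the cancellation phenomenon, the argument cannot produce either the stated main term or an error of size $e^{l(n)/3}$.
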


In 1966, Andrews and Dragonette \cite[pp. 456]{dragonette} conjectured a Rademacher-type infinite series for $\alpha(n)$. This conjecture was proved by Bringmann and Ono \cite{ono}, who obtained the following formula:
\begin{equation}\label{BO}
    \alpha(n) = \pi (24n - 1)^{-\frac14} \sum_{k = 1}^\infty \frac{(-1)^{\lfloor \frac{k+1}{2} \rfloor} A_{2k}\left(n - \frac{k(1 + (-1)^k)}{4}\right)}{k}  \cdot I_{1/2}\left( \frac{\pi\sqrt{24n - 1}}{12k} \right)
\end{equation}
where $A_{2k}(n)$ is a certain twisted Kloosterman-type sum and $I_{1/2}$ is the $I$-Bessel function of order 1/2. One can easily show that the $k=1$ term in (\ref{BO}) agrees with the main term in Theorem \ref{thm:main}. Since (\ref{BO}) is only conditionally convergent, it is difficult to bound. Using a different, finite algebraic formula for $\alpha(n)$ due to Alfes \cite{alfes}, Masri \cite[Theorem 1.3]{equi2} gave an asymptotic formula for $\alpha(n)$ with a power-saving error term. Ahlgren and Dunn \cite[Theorem 1.3]{dunn} also produced an asymptotic formula with a power-saving error term by bounding the series (\ref{BO}) directly.

Using Theorem \ref{thm:main}, we will show a certain convexity property for $N(r, 2; n)$. In particular, we aim to prove the following conjecture of Hou and Jagadeesan \cite[Conjecture 4.1]{hou}:
\begin{conjecture}[Hou/Jagadeesan] \label{conj:main}
    If $r = 0$ (resp. $r = 1$), then we have that
    \begin{equation*}
        N(r,2;a)N(r,2;b) > N(r,2;a+b)
    \end{equation*}
    for all $a,b \geq 11$ (resp. $12$).
\end{conjecture}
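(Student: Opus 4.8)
The plan is to express the rank-counting functions through the partition function and the mock theta coefficients, and then use Theorem~\ref{thm:main} to show that the latter are negligible. Since $N(0,2;n)+N(1,2;n)=p(n)$ and $N(0,2;n)-N(1,2;n)=\alpha(n)$, we have
\begin{equation*}
N(r,2;n)=\tfrac12\bigl(p(n)+(-1)^r\alpha(n)\bigr),\qquad \tfrac12\bigl(p(n)-\abs{\alpha(n)}\bigr)\le N(r,2;n)\le \tfrac12\bigl(p(n)+\abs{\alpha(n)}\bigr).
\end{equation*}
Substituting these two-sided bounds into the target inequality, bounding each factor on the left from below and the single term on the right from above, it suffices to establish
\begin{equation*}
\tfrac14 p(a)p(b)-\tfrac12 p(a+b) > \tfrac14\bigl(p(a)\abs{\alpha(b)}+p(b)\abs{\alpha(a)}\bigr)+\tfrac12\abs{\alpha(a+b)}.
\end{equation*}
This single inequality handles both $r=0$ and $r=1$ at once; the distinct thresholds $11$ and $12$ will enter only through the final finite check.

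Next I would insert effective estimates for the two arithmetic functions appearing. Theorem~\ref{thm:main} gives $\abs{\alpha(n)}\le \tfrac{\sqrt6}{\sqrt{24n-1}}e^{l(n)/2}+\abs{E(n)}$ with $l(n)=\tfrac\pi6\sqrt{24n-1}$, so $\abs{\alpha(n)}$ grows like $e^{l(n)/2}$ up to the subexponential factor $2^{q(n)}\abs{D_n}^2=e^{o(\sqrt n)}$, while the $E$-term grows even more slowly, like $e^{l(n)/3}$. By contrast a standard effective two-sided bound for $p(n)$ (for instance via Lehmer's estimate for the tail of the Rademacher series) gives $p(n)\asymp n^{-1}e^{l(n)}$. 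Hence $\abs{\alpha(n)}/p(n)$ is, up to polynomial and subexponential factors, of size $e^{-l(n)/2}$ from the main term and $e^{-2l(n)/3}$ from $E(n)$; both tend to $0$. Dividing the displayed inequality by $\tfrac14 p(a)p(b)$, the three error contributions become
\begin{equation*}
\frac{\abs{\alpha(a)}}{p(a)}+\frac{\abs{\alpha(b)}}{p(b)}+\frac{2\abs{\alpha(a+b)}}{p(a)p(b)},
\end{equation*}
each of which I would bound explicitly so as to be small once $a$ and $b$ are large.

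The core is then the quantitative super-multiplicativity $p(a)p(b)>2p(a+b)$, with enough room to swallow the errors above. Writing $p(n)\asymp n^{-1}e^{l(n)}$ and $l(n)\asymp\tfrac{\pi}{6}\sqrt{24n}$, the exponential part of $p(a)p(b)/p(a+b)$ is governed by $\sqrt a+\sqrt b-\sqrt{a+b}$, which for $a\le b$ satisfies $\sqrt a+\sqrt b-\sqrt{a+b}\ge(2-\sqrt2)\sqrt{a}=(2-\sqrt2)\sqrt{\min(a,b)}$. Thus $p(a)p(b)/p(a+b)$ exceeds a fixed constant far larger than $2$ already for $a,b\ge 11$ (the polynomial prefactor $\tfrac{a+b}{ab}$ only mildly tempering this), and I would turn this into an explicit lower bound for $\tfrac14 p(a)p(b)-\tfrac12 p(a+b)$ that dominates the combined error for all $a,b$ beyond an explicit threshold $N_0$.

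It remains to treat $\min(a,b)<N_0$. Here I would fix the smaller variable $a$ and regard the inequality as a one-variable statement in $b$: after the reductions above it reads $N(r,2;a)>\tfrac{p(a+b)+\abs{\alpha(a+b)}}{p(b)-\abs{\alpha(b)}}$, and since $N(r,2;a)$ is a fixed integer exceeding $1$ while the right-hand side tends to $1$ as $b\to\infty$ (because $p(a+b)/p(b)\to1$ and $\abs{\alpha}/p\to0$), the inequality holds for all sufficiently large $b$; what survives is a finite set of pairs with both entries bounded, verified directly from tabulated values of $p(n)$ and $\alpha(n)$. The main obstacle I anticipate is quantitative rather than conceptual: the very large constant $4.30\times10^{23}$ in the error bound of Theorem~\ref{thm:main} means the estimates only become decisive for $n$ in the thousands, so the threshold $N_0$ — and hence the finite verification region — is correspondingly large, though still a finite, computer-checkable set. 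Carrying explicit, uniform constants through the subexponential factor $2^{q(n)}\abs{D_n}^2$ and through the lower bound for $\sqrt a+\sqrt b-\sqrt{a+b}$, so that exactly this finite region is left, is where the real work lies.
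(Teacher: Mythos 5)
Your proposal is correct in substance and follows the same overall template as the paper: write $N(r,2;n)=\tfrac12\bigl(p(n)+(-1)^r\alpha(n)\bigr)$, control $\alpha(n)$ by Theorem~\ref{thm:main} and $p(n)$ by an effective Rademacher-type estimate (the paper uses \eqref{eq:partition-bound}, packaged as Corollary~\ref{cor:main}), reduce the supermultiplicativity of the main term to the diagonal $a=b$, and finish with a finite computation; your inequality $\sqrt a+\sqrt b-\sqrt{a+b}\ge(2-\sqrt2)\sqrt{\min(a,b)}$ is precisely the paper's reduction to $C=1$ via the monotonicity of $T_a(C)$ and $S_a(C)$ under the substitution $b=Ca$. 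The genuine divergence is in where the computation is placed, and it materially affects feasibility. The paper first converts the constant $4.30\times10^{23}$ into the clean two-sided bound of Lemma~\ref{lem:N-bound}, $M(n)\bigl(1\pm n^{-1/2}\bigr)e^{l(n)}$, made valid for all $n\ge8$ (resp.\ $7$) by an inexpensive direct verification for $n<4543$; after this bootstrap every constant in the convexity argument is moderate, the analytic inequality \eqref{eq:final-ineq} closes for all $a\ge18$, and the residual check is the seven-row Table~\ref{tbl:ineq} with $b\le24$. You instead carry the raw constants through the whole argument, so your crossover $N_0$ sits in the thousands; moreover your fixed-$a$ tail argument needs $b\gtrsim a^2$ before $p(a+b)/p(b)$ falls below the crude lower bound $N(r,2;a)\ge2$, since $l(a+b)-l(b)\approx 2\pi a/\sqrt{24b}$, so taken literally your leftover region involves $a+b$ of size $N_0^2$, far beyond tabulation. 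This is repairable within your own framework: use exact computed values of $N(r,2;a)$ (of size roughly $p(a)/2$, not $2$) for $a<N_0$, which collapses the tail thresholds to $b\approx N_0$ and leaves a check over pairs $a,b<N_0$ --- finite and machine-checkable, but orders of magnitude heavier than the paper's, which is exactly what the bootstrap of Lemma~\ref{lem:N-bound} buys. The remaining details of your sketch are sound: discarding the favorable cross term $\tfrac14\abs{\alpha(a)}\abs{\alpha(b)}$ is legitimate, $p(a+b)/p(b)\to1$ for fixed $a$ is correct, and your observation that the thresholds $11$ versus $12$ enter only through the final finite check matches the paper, whose analytic argument never distinguishes the two residues.
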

Hou and Jagadeesan \cite[Theorem 1.1]{hou} proved a similar convexity bound modulo $3$; however, their techniques do not extend to modulus two. Here, we overcome these difficulties using Theorem \ref{thm:main} and prove the following:
\begin{theorem} \label{thm:conj}
    Conjecture \ref{conj:main} is true.
\end{theorem}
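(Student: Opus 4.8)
The plan is to translate the statement about $N(r,2;n)$ into one about the partition function $p(n)$ and the mock theta coefficients $\alpha(n)$, and then to reduce to the super-multiplicativity of $p(n)$ with $\alpha$ treated as a controlled lower-order correction. Since every partition has either even or odd rank, $N(0,2;n)+N(1,2;n)=p(n)$, and by definition $N(0,2;n)-N(1,2;n)=\alpha(n)$; hence
$$N(0,2;n)=\frac{p(n)+\alpha(n)}{2},\qquad N(1,2;n)=\frac{p(n)-\alpha(n)}{2}.$$
In either case $N(r,2;n)$ lies between $\tfrac12(p(n)-\abs{\alpha(n)})$ and $\tfrac12(p(n)+\abs{\alpha(n)})$, so bounding the product from below and the single term from above, it suffices to prove the uniform inequality
$$\bigl(p(a)-\abs{\alpha(a)}\bigr)\bigl(p(b)-\abs{\alpha(b)}\bigr) > 2\bigl(p(a+b)+\abs{\alpha(a+b)}\bigr)$$
for all admissible $a,b$; this single estimate handles both $r=0$ and $r=1$ at once.

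Next I would feed in two ingredients. First, Theorem \ref{thm:main} gives an effective bound $\abs{\alpha(n)}\le B(n)$, where the main term and the error both have exponential rate at most $l(n)/2=\pi\sqrt{\abs{D_n}}/12$, which is exactly half the rate $\pi\sqrt{2n/3}$ governing $p(n)$; thus $\abs{\alpha(n)}$ is of size roughly $p(n)^{1/2}$ up to the explicit constants in the theorem. Second, I would invoke classical effective two-sided estimates for $p(n)$ coming from Rademacher's exact formula (Lehmer-type error bounds), which pin down $p(n)$ to within an explicit relative error. Combining these, for $n$ beyond an explicit threshold $N_0$ one obtains $\abs{\alpha(n)}\le \varepsilon\, p(n)$ with $\varepsilon$ as small as desired, so that $p(n)-\abs{\alpha(n)}\ge(1-\varepsilon)p(n)$ and $p(n)+\abs{\alpha(n)}\le(1+\varepsilon)p(n)$.

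With these in hand the inequality reduces, for $a,b\ge N_0$, to the effective super-multiplicativity statement $(1-\varepsilon)^2 p(a)p(b) > 2(1+\varepsilon)p(a+b)$. Using the explicit lower bounds on $p(a),p(b)$ and the upper bound on $p(a+b)$, this follows from the elementary fact that $\sqrt a+\sqrt b>\sqrt{a+b}$ for $a,b\ge 1$: the product $p(a)p(b)$ has exponential rate $\pi\sqrt{2/3}\,(\sqrt a+\sqrt b)$, strictly exceeding the rate $\pi\sqrt{2/3}\,\sqrt{a+b}$ of $p(a+b)$, so the ratio $p(a)p(b)/p(a+b)$ grows without bound and dwarfs the constant factor $2(1+\varepsilon)/(1-\varepsilon)^2$ once $a,b$ are large. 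This settles all cases with $\min(a,b)\ge N_0$.

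It remains to treat the finitely many boundary cases. When one argument, say $b$, is small (with $11\le b<N_0$, or $12\le b<N_0$ for $r=1$), I would substitute the exact value of $N(r,2;b)$ and run the same one-variable asymptotic comparison in $a$, which reduces the claim to a finite check once $a$ exceeds an explicit bound; the remaining range of $a$, together with the case where both $a$ and $b$ are below $N_0$, is verified by direct computation of $N(r,2;n)$. I expect the main obstacle to be purely quantitative: keeping the constants in Theorem \ref{thm:main} — in particular the slowly growing factor $2^{q(n)}$ and the large prefactor — under tight enough control that the threshold $N_0$, and hence the size of the finite computation, stays within feasible range. Establishing a clean, fully explicit lower bound for $p(a)p(b)-2p(a+b)$ that provably absorbs every $\alpha$-error term is the crux of the argument.
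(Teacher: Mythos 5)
Your proposal is correct and follows essentially the same route as the paper: the two-sided bounds $\tfrac{1}{2}\left(p(n)-\abs{\alpha(n)}\right) \le N(r,2;n) \le \tfrac{1}{2}\left(p(n)+\abs{\alpha(n)}\right)$, made effective via Theorem \ref{thm:main} together with an effective Rademacher-type bound on $p(n)$, are precisely the content of Corollary \ref{cor:main} and Lemma \ref{lem:N-bound}. The paper then carries out your $\sqrt{a}+\sqrt{b}>\sqrt{a+b}$ main-term comparison in the equivalent form $T_a(C) > \log S_a(C) - \log M(a)$ with $b = Ca$, exploiting monotonicity in $C$ to obtain the explicit threshold $a \ge 18$ and a short table of critical values $C_a$ for $11 \le a \le 17$, leaving the same kind of finite computational check you anticipate.
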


Hou and Jagadeesan also discussed a direct consequence of Conjecture \ref{conj:main}, analogous to their own result for partition ranks modulo 3 \cite[Theorem 1.2]{hou}. Extend $N(r,t;n)$ to partitions as in \cite{ono} by
\begin{equation*}
    N(r,t;\lambda) := \prod_{j=1}^{k} N(r,t;\lambda_j).
\end{equation*}

Let $P(n)$ denote the set of all partitions of $n$. Hou and Jagadeesan conjectured \cite[Conjecture 4.2]{hou} the maximal values of these functions over $P(n)$ for $t = 2$, where the maximal value is defined as
\begin{equation*}
    \maxN(r,t;n) := \max(N(r,t;\lambda) \colon \lambda \in P(n)),
\end{equation*}
and characterized the partitions which attain them.
\begin{conjecture}[Hou/Jagadeesan] \label{conj:maxN}
    The following are true:
    \begin{enumerate}
        \item If $n \geq 5$, then we have that
        \begin{equation*}
            \maxN(0,2;n) =
            \begin{cases}
                3^{\frac{n}{3}} & n \equiv 0 \pmod{3} \\
                11 \cdot 3^{\frac{n-7}{3}} & n \equiv 1 \pmod{3} \\
                5 \cdot 3^{\frac{n-5}{3}} & n \equiv 2 \pmod{3},
            \end{cases}
        \end{equation*}
        and it is achieved at the unique partitions
        \begin{align*}
            &(3,3,\dots,3) \text{ when } n \equiv 0 \pmod{3} \\
            &(3,3,\dots,7) \text{ when } n \equiv 1 \pmod{3} \\
            &(3,3,\dots,5) \text{ when } n \equiv 2 \pmod{3}.
        \end{align*}
        
    \item If $n \geq 8$, then we have that
    \begin{equation*}
        \maxN(1,2;n) =
        \begin{cases}
            2^{\frac{n}{2}} & n \equiv 0 \pmod{2} \\
            12 \cdot 2^{\frac{n-9}{2}} & n \equiv 1 \pmod{2},
        \end{cases}
    \end{equation*}
    and it is achieved at the following classes of partitions
    \begin{align*}
        &(2,2,\dots,2) \text{ when } n \equiv 0 \pmod{2} \\
        &(2,2,\dots,9) \text{ when } n \equiv 1 \pmod{2}
    \end{align*}
    up to any number of the following substitutions: $(2,2) \to (4)$ and $(2,2,2) \to (6)$.
    \end{enumerate}
\end{conjecture}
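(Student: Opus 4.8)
The plan is to convert the multiplicative optimization into an additive one and then reduce it, via the convexity bound of Theorem \ref{thm:conj}, to a finite verification. Fix $r$ and write $c = 3,\ \beta = 1/3$ when $r=0$ and $c = 2,\ \beta = 1/2$ when $r=1$, and set $g(j) := N(r,2;j)\cdot c^{-\beta j}$, so that $g(3) = 1$ when $r=0$ and $g(2) = g(4) = g(6) = 1$ when $r=1$. Since $N(r,2;\lambda) = \prod_j N(r,2;\lambda_j)$ and $\sum_j \lambda_j = n$, every $\lambda \vdash n$ satisfies $N(r,2;\lambda) = c^{\beta n}\prod_j g(\lambda_j)$, whence $\maxN(r,2;n) = c^{\beta n}\max_{\lambda \vdash n}\prod_j g(\lambda_j)$. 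Parts $j$ with $N(r,2;j) = 0$ (only $j=2$ when $r=0$, and $j \in \{1,3\}$ when $r=1$) zero out the product and may be discarded. In these terms the conjecture is the assertion that $g(j)\le 1$ for every admissible $j$, with equality exactly at $j=3$ for $r=0$ and at $j \in \{2,4,6\}$ for $r=1$, together with the identification of the least-penalty part needed to correct the residue of $n$.

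I would prove the full statement by strong induction on $n$, using the recursion $\maxN(r,2;n) = \max\!\big(N(r,2;n),\ \max_{1\le a \le n-1} \maxN(r,2;a)\,\maxN(r,2;n-a)\big)$. For $n$ below the convexity threshold ($n \le 21$ for $r=0$, $n \le 23$ for $r=1$), I evaluate the finitely many $N(r,2;j)$ directly from $N(0,2;j) = (p(j)+\alpha(j))/2$ and $N(1,2;j) = (p(j)-\alpha(j))/2$ and check the desired inequalities by hand. For $n$ above the threshold, Theorem \ref{thm:conj} gives $N(r,2;n) < N(r,2;a)N(r,2;b)$ for a split $n = a+b$ with $a,b \ge 11$ (resp. $12$) and $a,b < n$; since the inductive hypothesis supplies $N(r,2;a)\le \maxN(r,2;a) = c^{\beta a}$ and likewise for $b$, the single-part term is dominated and $g(n) < 1$. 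Choosing the residues of $a,b$ appropriately upgrades this to $g(n) < g(7)$ for all large $n$ (and to $g(n) < g(9)$ among large odd $n$ when $r=1$), so the global maxima of $g$ over each residue class are already attained in the finite computed range.

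The extremal structure then follows. For $r=0$, the bound $g(j)\le 1$ gives $\maxN(0,2;n)\le 3^{n/3}$ with equality iff every part is $3$, settling $n\equiv 0$ uniquely via $(3,\dots,3)$. For $n\equiv 1$ or $2\pmod 3$ a non-$3$ part is forced; any configuration using at least two non-$3$ parts has product at most $\big(\max_{j\neq 3}g(j)\big)^2 = g(7)^2$, which the computed values place strictly below both $g(7)$ and $g(5)$, so a single non-$3$ part is optimal, and the finite data identify it as $7$ (value $g(7) = 11\cdot 3^{-7/3}$) for residue $1$ and $5$ (value $g(5) = 5\cdot 3^{-5/3}$) for residue $2$, yielding the stated formulas and the unique partitions $(3,\dots,3,7)$ and $(3,\dots,3,5)$. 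For $r=1$, the equality set $\{2,4,6\}$ of $g$ produces precisely the substitution freedom $(2,2)\to(4)$ and $(2,2,2)\to(6)$: for even $n$ any partition into these parts attains $2^{n/2}$, while for odd $n$ an odd number of odd parts is forced, and replacing one odd part by three multiplies the product by a factor $\le g(9)^2 < 1$, so exactly one odd part appears, necessarily $9$ (value $g(9) = 12\cdot 2^{-9/2}$), with the remaining even weight filled by $2$'s, $4$'s, and $6$'s, giving $12\cdot 2^{(n-9)/2}$ and the claimed family of maximizers.

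The main obstacle I anticipate is concentrated entirely in the finite range. First, the convexity bound only splits parts of size $\ge 22$ (resp. $\ge 24$), so the medium parts $11\le j\le 21$ (resp. $12 \le j \le 23$) are untouched by the induction and must be controlled by explicit evaluation of $N(r,2;j)$; verifying that these computed values genuinely satisfy $g(j)\le 1$ and neither tie with nor exceed $g(7)$, $g(5)$, or $g(9)$ is where the argument is most at risk and must be checked carefully. Second, one must confirm that no combination of two or more suboptimal parts can collectively beat the single designated remainder part in any residue class, and that the boundary cases (notably $n = 22,23$ and the small $n$ excluded from the statement) are consistent. These are finite but delicate residue- and parity-bookkeeping steps rather than a single conceptual difficulty, and they are exactly where the explicit constants from Theorem \ref{thm:conj} must be seen to be sharp enough.
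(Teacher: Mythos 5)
Your proposal is correct and takes essentially the same route as the paper: both use Theorem \ref{thm:conj} to exclude large parts (your splitting for $n \geq 22$, resp.\ $24$, is the paper's substitution $(i) \to (\lfloor i/2 \rfloor, \lceil i/2 \rceil)$ for $i \geq 24$), both rely on explicit computation of the small range (the paper's SageMath table for $n \leq 23$), and your normalized weight $g(j) = N(r,2;j)\,c^{-\beta j}$ with comparisons such as $g(7)^2 < g(5) < g(7)$ and $g(9)^2 < 1$ is exactly the paper's substitution calculus (e.g.\ $(5,5) \to (3,7)$, $(7,7) \to (3,3,3,5)$, $(9,9) \to (2,\dots,2)$) rewritten multiplicatively. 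The only organizational difference is that you run a strong induction on the recursion $\maxN(r,2;n) = \max\bigl(N(r,2;n),\ \max_a \maxN(r,2;a)\maxN(r,2;n-a)\bigr)$, whereas the paper directly constrains part multiplicities of a fixed maximal partition; the underlying finite verifications are the same.
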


\textbf{Remark.} Conjecture \ref{conj:maxN}, part (1) is a slight refinement of Hou and Jagadeesan's original claim. The result holds for $n \geq 5$ rather than $n \geq 6$.

\vspace{0.1in}

Utilizing Theorem \ref{thm:main}, we prove the following:
\begin{theorem} \label{thm:maxN}
    Conjecture \ref{conj:maxN} is true.
\end{theorem}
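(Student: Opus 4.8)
The plan is to treat each case as a constrained optimization: for fixed $r$ we must maximize $\prod_{j} N(r,2;\lambda_j)$ over all partitions $\lambda = (\lambda_1,\dots,\lambda_k)$ of $n$. Writing $\rho_0 := 3^{1/3}$ and $\rho_1 := 2^{1/2}$, the engine of the proof is the per-part rate bound
\[
    N(r,2;m) \le \rho_r^{\,m} \qquad \text{for all } m \ge 1,
\]
with equality precisely at $m = 3$ when $r=0$ and at $m \in \{2,4,6\}$ when $r=1$. Granting this, every partition satisfies $\prod_j N(r,2;\lambda_j) \le \rho_r^{\sum_j \lambda_j} = \rho_r^{\,n}$, which already pins down the maximum, and the extremal partitions, on the residue classes where $\rho_r^{\,n}$ is attained by an admissible partition: $n \equiv 0 \pmod 3$ for $r = 0$ (all parts equal to $3$), and $n$ even for $r=1$.

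To establish the rate bound I would split at an explicit threshold $m_0$. For $m > m_0$ I would feed the effective bound on $\abs{\alpha(m)}$ from Theorem~\ref{thm:main}, together with an effective upper bound for $p(m)$ (a truncation of the Rademacher series), into the identities $N(0,2;m) = \tfrac12\bigl(p(m)+\alpha(m)\bigr)$ and $N(1,2;m) = \tfrac12\bigl(p(m)-\alpha(m)\bigr)$; since $p(m)$ grows like $e^{\pi\sqrt{2m/3}}$ while $\rho_r^{\,m}$ grows like $e^{cm}$, and the main term of $\alpha(m)$ from Theorem~\ref{thm:main} has strictly smaller exponential order than $p(m)$, the inequality $N(r,2;m) < \rho_r^{\,m}$ holds with room to spare once $m$ exceeds a modest bound. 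For $m \le m_0$ I would verify the bound, and read off the equality cases, by directly computing $N(r,2;m)$ from the series for $f(q)$ and a table of $p(m)$.

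The remaining residue classes require a sharper analysis, since there $\rho_r^{\,n}$ is not an attainable value. Here I would pass to the deficiencies $\phi_r(m) := \log\!\bigl(N(r,2;m)/\rho_r^{\,m}\bigr) \le 0$ and write $\prod_j N(r,2;\lambda_j) = \rho_r^{\,n}\exp\!\bigl(\sum_j \phi_r(\lambda_j)\bigr)$, so that maximizing the product is equivalent to maximizing $\sum_j \phi_r(\lambda_j)$. The parts with $\phi_r(m) = 0$ are \emph{free} (they generate the multiples of $3$ when $r=0$ and the even integers when $r=1$), so the problem reduces to choosing a minimal-cost set of \emph{correction} parts realizing the residue of $n$. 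Because $\phi_r(m) \to -\infty$, only boundedly many parts appear in an optimum, and a finite enumeration identifies the cheapest corrections: a single part $7$ (cost $\log(3^{7/3}/11)$) when $n\equiv 1 \pmod 3$, a single part $5$ when $n \equiv 2 \pmod 3$, and a single part $9$ for odd $n$ when $r=1$. Substituting back into $\rho_r^{\,n}e^{\phi_r}$ yields exactly $11\cdot 3^{(n-7)/3}$, $5\cdot 3^{(n-5)/3}$, and $12\cdot 2^{(n-9)/2}$. The substitution freedom in the $r=1$ characterization is explained by the exact relations $N(1,2;2)^2 = N(1,2;4)$ and $N(1,2;2)^3 = N(1,2;6)$, which make the free parts $2,4,6$ interchangeable; tracking the equality cases of $\phi_r$ throughout then gives the stated uniqueness (resp. the stated family up to the two substitutions), and a direct computation settles the small cases below the reach of the asymptotic argument along with the exact starting points $n \ge 5$ and $n \ge 8$.

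The main obstacle is quantitative: the rate bound is genuinely tight, with $N(0,2;7) = 11$ only barely below $3^{7/3} \approx 12.98$ and $N(1,2;m)$ meeting $2^{m/2}$ exactly at $m=2,4,6$. Proving $N(r,2;m) < \rho_r^{\,m}$ rigorously for \emph{every} $m$ therefore demands that the effective error term $E(m)$ of Theorem~\ref{thm:main}, paired with the effective bound on $p(m)$, be sharp enough to push $m_0$ down into a range where the finitely many remaining values can actually be computed and checked. Balancing this threshold against the size of the finite verification, while controlling the equality cases finely enough to extract the exact extremal partitions and the precise ranges of validity, is the delicate part of the argument.
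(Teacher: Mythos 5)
Your proposal is correct in substance but takes a genuinely different route from the paper. The paper first caps the largest part of an extremal partition at $23$ by splitting any part $i \geq 24$ as $(i) \to (\lfloor i/2 \rfloor, \lceil i/2 \rceil)$ --- in effect an application of the convexity inequality of Theorem \ref{thm:conj}, since both halves are $\geq 12$ --- and then runs a chain of local exchange arguments (Propositions \ref{prop:mult}, \ref{prop:m0}, \ref{prop:m1}, \ref{prop:1m}) that eliminate or cap the multiplicity of each part $i \leq 23$ via explicit substitutions read off from the computed table of $\maxN(r,2;n)$ for $n \leq 23$ (Table \ref{tbl:maxN}), leaving only parts $s_r$ together with at most one correction part $5$, $7$, or $9$. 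You instead prove the global per-part rate bound $N(r,2;m) \leq \rho_r^m$ with equality exactly at the free parts, and then minimize the total deficiency $-\sum_j \phi_r(\lambda_j)$ over correction multisets; since every non-free part costs at least $-\phi_0(7) \approx 0.166$ (resp. $-\phi_1(9) \approx 0.634$), an optimal configuration contains at most one non-free part, and a finite enumeration identifies $7$, $5$, and $9$ as the unique cheapest corrections, reproducing the conjectured values and the $\{2,4,6\}$ substitution classes for $r=1$ (via $N(1,2;2)^2 = N(1,2;4)$ and $N(1,2;2)^3 = N(1,2;6)$, exactly the relations the paper isolates in Proposition \ref{prop:mult}). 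Two comparative remarks. First, your route is lighter on analytic input than you suggest: you do not need Theorem \ref{thm:main} at all for the tail, because the trivial bound $N(r,2;m) \leq p(m) < e^{\pi\sqrt{2m/3}}$ already falls below $\rho_r^m$ once $m \gtrsim 55$, so your argument is logically independent of the convexity theorem and of the effective error bound, whereas the paper's first step genuinely leans on them. Second, the price is a somewhat larger finite verification (parts up to roughly $55$ rather than the paper's $23$) together with delicate strictness checks at near-ties --- for instance $\phi_0(11)$ ties $\phi_0(5)$ precisely when $N(0,2;11)$ attains $\maxN(0,2;11) = 45$, and $\phi_1(13)$ ties $\phi_1(9)$ precisely when $N(1,2;13)$ attains $48$, so uniqueness of the extremal classes hinges on verifying strict inequality of the single-part values there --- but these are exactly the computations your plan budgets for, and they are of the same nature as the SageMath checks the paper itself performs.
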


We also demonstrate effective equidistribution of partition ranks modulo 2 (see Corollary \ref{cor:equi}). Asymptotic equidistribution of partition ranks modulo $t$ was demonstrated by Males \cite{equi} for all $t \geq 2$. Masri \cite{equi2} proved equidistribution of partition ranks modulo 2 with a power-saving error term, however his results were not effective, and so cannot be applied toward Conjecture \ref{conj:main}.

To give an effective bound on the error term for $\alpha(n)$, we
will use a finite algebraic formula due to Bruinier and Schwagenscheidt \cite[Theorem 3.1]{BruSch} which express $\alpha(n)$ as a trace over singular moduli. To state this formula, consider the weight zero weakly-holomorphic modular form for $\Gamma_0(6)$ defined by
\begin{equation} \label{eq:F-fourier}
    F(z) := -\frac{1}{40} \frac{E_4(z) + 4E_4(2z) - 9E_4(3z) - 36E_4(6z)}{(\eta(z) \eta(2z) \eta(3z) \eta(6z))^2} = q^{-1} - 4 - 83q - 296q^2 + \dots .
\end{equation}
Bruinier and Schwagenscheidt \cite[Theorem 3.1]{BruSch} proved
\begin{theorem*}[Bruinier/Schwagenscheidt]
    For $n \geq 1$, we have
    \begin{equation*}
        \alpha(n) = -\dfrac{1}{\sqrt{\abs{D_n}}} \mathrm{Im}(S(n))
    \end{equation*}
    where
    \begin{equation*}
        S(n) := \sum_{[Q]} F(\tau_Q).
    \end{equation*} 
    Here, the sum is over the $\Gamma_0(6)$ equivalence classes of discriminant $D_n$ positive definite, integral binary quadratic forms $Q = [a, b, c]$ such that $6 \mid a$ and $b \equiv 1 \pmod{12}$, and $\tau_Q$ is the Heegner point given by the root $Q(\tau_Q,1)$ in the complex upper half-plane $\H$.
\end{theorem*}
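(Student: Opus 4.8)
The plan is to realize both sides of the claimed identity as Fourier coefficients of a single weight $1/2$ harmonic Maass form obtained as a regularized theta lift of $F$. On the automorphic side, recall (Zwegers; Bringmann--Ono) that $q^{-1/24}f(q)=\sum_{n\ge 0}\alpha(n)q^{\abs{D_n}/24}$ is the holomorphic part of a harmonic Maass form $\widehat f$ of weight $1/2$ on a suitable congruence subgroup (of level $144$), whose shadow under $\xi_{1/2}$ is an explicit weight $3/2$ unary theta series and whose principal part at the cusps is known. On the geometric side, the trace $S(n)=\sum_{[Q]}F(\tau_Q)$ is a sum of CM values of the weight $0$ weakly holomorphic form $F$ over the Heegner points of discriminant $D_n$ on the modular curve $X_0(6)$. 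The strategy is to show that the Millson theta lift of $F$ is exactly $\widehat f$ up to an explicit constant, and then to read off both the $\alpha(n)$ description and the $\mathrm{Im}(S(n))$ description from its two natural Fourier expansions.

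First I would set up the Lorentzian lattice $L$ of signature $(1,2)$ attached to level $6$, together with its dual $L'$ and the Weil representation $\rho_L$ of the metaplectic cover of $\mathrm{SL}_2(\Z)$, normalized so that $\Gamma_0(6)\backslash\H$ is the associated modular curve and so that the congruence conditions $6\mid a$ and $b\equiv 1\pmod{12}$ on $Q=[a,b,c]$ single out one coset $h\in L'/L$ carrying discriminant $D_n$. I would then introduce the Millson theta kernel $\Theta_M(\tau,z)$, which transforms with weight $1/2$ and representation $\rho_L$ in $\tau$ while being $\Gamma_0(6)$-invariant in $z$, and define the regularized lift
\[
\Phi^M(\tau,F)=\int_{\Gamma_0(6)\backslash\H}^{\mathrm{reg}} F(z)\,\overline{\Theta_M(\tau,z)}\,d\mu(z).
\]
A standard argument shows $\Phi^M(\cdot,F)$ is a harmonic Maass form of weight $1/2$; the precise shape of the Millson kernel (built from the Schwartz form paired with the lowering operator) is what produces the factor $1/\sqrt{\abs{D_n}}$ and, because $\alpha(n)$ is real while the CM values $F(\tau_Q)$ are complex, the imaginary part in the final formula.

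Next I would compute the Fourier expansion of $\Phi^M(\tau,F)$ by the usual unfolding of the regularized integral against the principal part $q^{-1}$ of $F$ and against its holomorphic value, identifying the Heegner divisor of discriminant $D_n$ with the set $\{\tau_Q\}$ cut out by the coset chosen above. The key output is that the coefficient of the holomorphic part at index $\abs{D_n}/24$ equals $\tfrac{1}{\sqrt{\abs{D_n}}}\,\mathrm{Im}\,S(n)$. To finish I would identify $\Phi^M(\cdot,F)$ with $\widehat f$: computing $\xi_{1/2}\Phi^M(\cdot,F)$ reduces, by interchanging $\xi$ with the theta lift, to a Kudla--Millson/Shintani lift that I would match to the known weight $3/2$ theta shadow of $\widehat f$, while the principal part $q^{-1}$ of $F$ forces the principal part of the lift to agree with that of $\widehat f$. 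Since a weight $1/2$ harmonic Maass form on this group with vanishing shadow and vanishing principal part is holomorphic and lies in a space one checks to be trivial, the two forms coincide; comparing the $\abs{D_n}/24$-coefficients of the holomorphic parts then yields $\alpha(n)=-\tfrac{1}{\sqrt{\abs{D_n}}}\,\mathrm{Im}(S(n))$ after tracking the normalizing constant.

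The main obstacle is the explicit unfolding of the regularized Millson theta integral: one must justify the regularization in the presence of the pole of $F$, evaluate the archimedean integrals (Whittaker/confluent hypergeometric functions) to isolate the holomorphic coefficients, and verify that the lattice coset and genus-character data reproduce exactly the conditions $6\mid a$, $b\equiv 1\pmod{12}$ together with the normalization $1/\sqrt{\abs{D_n}}$. The secondary difficulty is the shadow computation needed to pin down the lift as $\widehat f$ itself rather than merely as a form in the same space, which requires the weight $3/2$ shadow of $f(q)$ explicitly and its identification with $\xi_{1/2}\Phi^M(\cdot,F)$.
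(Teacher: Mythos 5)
This theorem is not proved in the paper you were given---it is imported verbatim from Bruinier and Schwagenscheidt \cite{BruSch}---so the only meaningful comparison is with the original source's argument. Your outline follows essentially that same route: the regularized Millson theta lift of $F$ against the signature $(1,2)$ lattice of level $6$ (with the coset of $L'/L$ encoding $6 \mid a$, $b \equiv 1 \pmod{12}$), identification of the lift with the Zwegers/Bringmann--Ono completion of $f(q)$ by matching principal parts and the weight $3/2$ shadow, and reading off $\alpha(n)$ and $\mathrm{Im}(S(n))/\sqrt{\abs{D_n}}$ from the two Fourier expansions, exactly as in \cite{BruSch} and the underlying work of Alfes-Neumann and Schwagenscheidt on the Millson lift.
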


Our proof of Theorem \ref{thm:main} is inspired by work of Locus-Dawsey and Masri \cite{spt}, who used a similar finite algebraic formula due to Ahlgren and Andersen \cite{spt-trace} for the Andrews smallest-parts function $\mathrm{spt}(n)$ to give an asymptotic formula for $\mathrm{spt}(n)$ with an effective bound on the error term and prove several conjectural inequalities of Chen \cite{chen}.

\vspace{0.1in}

\textbf{Organization}. The paper is organized as follows. In Section \ref{sec:quad}, we review some facts regarding quadratic forms and Heegner points. In Section \ref{sec:F}, we derive the Fourier expansion of $F(z)$ and effective bounds on its coefficients. In Section \ref{sec:main}, we prove Theorem \ref{thm:main}. In Section \ref{sec:cor}, we discuss corollaries to Theorem \ref{thm:main}. In Section \ref{sec:conj}, we prove Theorem \ref{thm:conj}. Finally, in Section \ref{sec:maxN}, we prove Theorem \ref{thm:maxN}.

\vspace{0.1in}

 \textbf{Acknowledgements}. We would like to thank Riad Masri, Matthew Young, and Agniva Dasgupta for their support in this work. We especially thank Narissara Khaochim for her contributions to the proof of Proposition \ref{prop:F-fourier} and Andrew Lin for very helpful comments. We also thank the referees for their detailed suggestions to improve the exposition. This research was completed in the 2020 REU in the Department of Mathematics at Texas A\&M University, supported by NSF grant DMS-1757872.

\section{Quadratic Forms and Heegner Points} \label{sec:quad}
Let $N$ be a positive integer and $D$ be a negative integer discriminant coprime to $N$. Let $\mathcal{Q}_{D,N}$ be the set of positive definite, integral binary quadratic forms \begin{equation*}
    Q(X,Y) = [a, b, c](X,Y) = aX^2 + bXY + c Y^2
\end{equation*}
with discriminant $b^2 - 4ac = D < 0$ with $a \equiv 0 \pmod{N}$. The congruence subgroup $\Gamma_0(N)$ acts on $\mathcal{Q}_{D,N}$ by \begin{equation*}
    Q \circ \sigma = [a^\sigma, b^\sigma, c^\sigma]
\end{equation*}
with $\sigma = \begin{pmatrix}
w & x \\
y & z
\end{pmatrix} \in \Gamma_0(N)$, where
\begin{align*}
    a^\sigma &= aw^2 + b wy + c y^2 \\
    b^\sigma &= 2a wx + b(wz+ x y) + 2 cyz \\
    c^\sigma &= a x^2 + bx z + c z^2.
\end{align*}

Given a solution $r \pmod{2N}$ of $r^2 \equiv D \pmod{4N}$, we define the subset of forms
\begin{equation*}
    \mathcal{Q}_{D,N,r} := \{Q = [a,b,c] \in \mathcal{Q}_{D, N} \mid b \equiv r\pmod{2N} \}.
\end{equation*}
The group $\Gamma_0(N)$ also acts on $\mathcal{Q}_{D,N,r}$. The number of $\Gamma_0(N)$ equivalence classes in $\mathcal{Q}_{D,N,r}$ is given by the Hurwitz-Kronecker class number $H(D)$.

We can also consider the subset $\mathcal{Q}^{\text{prim}}_{D,N}$ of primitive quadratic forms in $\mathcal{Q}_{D,N}$. These are the forms such that
\begin{equation*}
    \gcd(a,b,c) = 1.
\end{equation*}
In this case, the number of $\Gamma_0(N)$ equivalence classes in $\mathcal{Q}^{\mathrm{prim}}_{D,N,r}$ is given by the class number $h(D)$. 

To each form $Q \in \mathcal{Q}_{D,N}$, we associate a Heegner point $\tau_Q$ which is the root of $Q(X,1)$ given by
\begin{equation*}
    \tau_Q = \frac{-b + \sqrt{D}}{2a} \in \H.
\end{equation*}
The Heegner points $\tau_Q$ are compatible with the action of $\Gamma_0(N)$ in the sense that if $\sigma \in \Gamma_0(N)$, then
\begin{equation} \label{eq:compat}
    \sigma(\tau_Q) = \tau_{Q \circ \sigma^{-1}}.
\end{equation}

\section{Fourier Expansion of \texorpdfstring{$F(z)$}{F(z)}} \label{sec:F}

Let $D_n = -24n+1$ for $n \in \Z^+$ and define the trace of $F(z)$ by
\begin{equation*}
    S(n) := \sum_{[Q] \in \mathcal{Q}_{D_n,6,1}/\Gamma_0(6)} F(\tau_Q).
\end{equation*}
Proceeding as in \cite[Section 3]{spt}, we decompose $S(n)$ as a linear combination involving traces of primitive forms. Let $\Delta <0$ be a discriminant with $\Delta \equiv 1 \pmod{24}$ and define the class polynomials 
\begin{equation*}
    H_\Delta(X) := \prod_{[Q] \in \mathcal{Q}_{\Delta, 6, 1}/\Gamma_0(6)} (X - F(\tau_Q))
\end{equation*}
and 
\begin{equation*}
    \widehat{H}_{\Delta, r}(F; X) := \prod_{[Q] \in \mathcal{Q}^{\text{prim}}_{\Delta, 6, r}/ \Gamma_0(6)} (X - F(\tau_Q)).
\end{equation*}

Let $\{W_\ell\}_{\ell \mid 6}$ be the group of Atkin-Lehner operators for $\Gamma_0(6)$. We have by \cite[pp. 47]{BruSch}
\begin{equation} \label{eq:F-eigen} 
    F\vert_0 W_\ell = \beta(\ell) F
\end{equation}
where $\beta(\ell) = 1$ if $\ell = 1,2$ and $\beta(\ell) = -1$ if $\ell = 3,6$.

Using these eigenvalues we modify \cite[Lemma 3.7]{epsilon} to get the following:

\begin{lemma} \label{lem:decomp}
We have the decomposition
    \begin{equation*}
        H_{\Delta}(X) = \prod_{\substack{u > 0 \\ u^2 \mid \Delta}} \varepsilon(u)^{h(\Delta/u^2)} \widehat{H}_{\Delta/u^2, 1}(F; \varepsilon(u)X)
    \end{equation*}  where $\varepsilon(u) = 1$ if $u \equiv 1, 7 \pmod{12}$ and $\varepsilon(u) = -1$ if $u \equiv 5, 11 \pmod{12}$.
\end{lemma}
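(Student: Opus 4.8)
The plan is to group the forms appearing in $H_\Delta$ according to their content and then invoke the Atkin-Lehner eigenvalue relation \eqref{eq:F-eigen} to normalize the residue class of the middle coefficient. First I would use the fact that every $Q \in \mathcal{Q}_{\Delta, 6, 1}$ factors uniquely as $Q = u Q'$ with $u > 0$, $u^2 \mid \Delta$, and $Q' = [a', b', c']$ primitive of discriminant $\Delta/u^2$. Since $\Delta \equiv 1 \pmod{24}$, any such $u$ is coprime to $12$, and a short computation shows $(\Z/12\Z)^\times$ is elementary abelian so that every unit is its own inverse modulo $12$; hence the condition $b = u b' \equiv 1 \pmod{12}$ is equivalent to $b' \equiv u \pmod{12}$. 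Because scaling a form by $u$ leaves its root unchanged, $\tau_Q = \tau_{Q'}$ and $F(\tau_Q) = F(\tau_{Q'})$, and the factorization is $\Gamma_0(6)$-equivariant. This yields the content decomposition
\begin{equation*}
    H_\Delta(X) = \prod_{\substack{u > 0 \\ u^2 \mid \Delta}} \prod_{[Q'] \in \mathcal{Q}^{\mathrm{prim}}_{\Delta/u^2, 6, u}/\Gamma_0(6)} (X - F(\tau_{Q'})).
\end{equation*}

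Next I would normalize the residue $u$ to $1$ using the Atkin-Lehner operators. The four solutions of $r^2 \equiv \Delta \pmod{24}$ are $r \in \{1,5,7,11\} \pmod{12}$, and the group $\{W_\ell\}_{\ell \mid 6}$ acts simply transitively on them; writing residues via CRT as pairs of signs at the primes $2$ and $3$, one checks that $W_2, W_3, W_6$ carry the residue $1$ to $7, 5, 11$ respectively. Thus for each admissible $u$ there is a unique $\ell = \ell(u)$ such that $W_\ell$ induces a bijection $\mathcal{Q}^{\mathrm{prim}}_{\Delta/u^2, 6, u}/\Gamma_0(6) \to \mathcal{Q}^{\mathrm{prim}}_{\Delta/u^2, 6, 1}/\Gamma_0(6)$, and comparing the two tables shows $\beta(\ell(u)) = \varepsilon(u)$. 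Using the weight-zero relation $F(W_\ell z) = \beta(\ell) F(z)$ from \eqref{eq:F-eigen} together with the compatibility of $W_\ell$ with Heegner points (in the sense of \eqref{eq:compat}), each factor transforms as $F(\tau_{Q'}) = \varepsilon(u) F(\tau_{\widetilde Q})$ with $\widetilde Q$ ranging over $\mathcal{Q}^{\mathrm{prim}}_{\Delta/u^2, 6, 1}/\Gamma_0(6)$.

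Finally I would extract the signs. Since $\varepsilon(u) = \pm 1$ and the inner product over $\mathcal{Q}^{\mathrm{prim}}_{\Delta/u^2,6,1}/\Gamma_0(6)$ has exactly $h(\Delta/u^2)$ factors, the elementary identity $\varepsilon(u) X - F = \varepsilon(u)\bigl(X - \varepsilon(u) F\bigr)$ gives
\begin{equation*}
    \prod_{[Q']\in \mathcal{Q}^{\mathrm{prim}}_{\Delta/u^2,6,u}/\Gamma_0(6)} (X - F(\tau_{Q'})) = \varepsilon(u)^{h(\Delta/u^2)}\, \widehat{H}_{\Delta/u^2, 1}(F; \varepsilon(u) X),
\end{equation*}
and substituting into the content decomposition proves the lemma. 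The main obstacle is the second step: establishing that each $W_\ell$ descends to a well-defined bijection on $\Gamma_0(6)$-classes of primitive forms with the stated effect on the residue $b \pmod{12}$, and matching $\beta(\ell)$ with $\varepsilon(u)$. This requires explicit matrix representatives for the Atkin-Lehner operators and careful bookkeeping of the transformation $Q \mapsto Q \circ \sigma$; it is exactly the point at which we must modify \cite[Lemma 3.7]{epsilon} to account for the nontrivial eigenvalues $\beta(\ell)$, which are precisely what produce the signs $\varepsilon(u)$.
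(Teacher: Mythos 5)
Your proposal is correct and takes essentially the same route as the paper's own proof: the content decomposition $H_\Delta(X) = \prod_{u>0,\, u^2 \mid \Delta} \widehat{H}_{\Delta/u^2, u}(F;X)$ (which the paper cites from \cite[pp.~505, (1)]{decomp} rather than rederiving, and where your observation that units mod $12$ are involutions, forcing $b' \equiv u \pmod{12}$, is the same point), followed by moving the residues $5, 7, 11$ to $1$ via the Atkin--Lehner involutions $W_3, W_2, W_6$ with the eigenvalues $\beta(\ell)$ from (\ref{eq:F-eigen}) supplying the signs. Your matching $\beta(\ell(u)) = \varepsilon(u)$ and the sign extraction giving the factor $\varepsilon(u)^{h(\Delta/u^2)}$ agree exactly with the paper's computation, which carries out the same normalization through the chains $\widehat{H}_{\Delta,5}(F;x) = (-1)^{h(\Delta)}\widehat{H}_{\Delta,1}(F;-x)$, $\widehat{H}_{\Delta,7}(F;x) = \widehat{H}_{\Delta,11}(-F;x) = \widehat{H}_{\Delta,1}(F;x)$, and $\widehat{H}_{\Delta,11}(F;x) = (-1)^{h(\Delta)}\widehat{H}_{\Delta,1}(F;-x)$.
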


Comparing coefficients on both sides of Lemma 3.1 yields the decomposition
\begin{equation} \label{eq:s-decomp}
    S(n) = \sum_{\substack{u > 0 \\ u^2 \mid D_n}} \varepsilon(u) S_u(n)
\end{equation}
where
\begin{equation*}
    S_u(n) := \sum_{[Q] \in \mathcal{Q}_{D_n/u^2,6,1}^{\mathrm{prim}}/\Gamma_0(6)} F(\tau_Q).
\end{equation*}

We now express $S_u(n)$ as a trace involving primitive forms of level 1. As in \cite[Section 3]{spt}, we let $\mathbf{C}_6$ denote the following set of right coset representatives of $\Gamma_0(6)$ in $SL_2(\Z)$:
\begin{align*}
    \gamma_{\infty} &:= \pmat{1 & 0 \\ 0 & 1} \\
    \gamma_{1/3,r} &:= \pmat{1 & 0 \\ 3 & 1} \pmat{1 & r \\ 0 & 1}, \quad r = 0, 1 \\
    \gamma_{1/2,s} &:= \pmat{1 & 1 \\ 2 & 3} \pmat{1 & s \\ 0 & 1}, \quad s = 0, 1, 2 \\
    \gamma_{0,t} &:= \pmat{0 & -1 \\ 1 & 0} \pmat{1 & t \\ 0 & 1}, \quad t = 0, 1, 2, 3, 4, 5
\end{align*}
where $\gamma_{\infty}(\infty)$, $\gamma_{1/3,r}(\infty) = 1/3$, $\gamma_{1/2,s}(\infty) = 1/2$, and $\gamma_{0,t}(\infty) = 0$.

Recall that a form $Q = [a_Q,b_Q,c_Q] \in \mathcal{Q}_{\Delta,1}$ is reduced if
\begin{equation*}
    \abs{b_Q} \leq a_Q \leq c_Q,
\end{equation*}
and if either $\abs{b_Q} = a_Q$ or $a_Q = c_Q$, then $b_Q \geq 0$. Let $\mathcal{Q}_\Delta$ denote a set of primitive, reduced forms representing the equivalence classes in $\mathcal{Q}_{\Delta,1}^{\mathrm{prim}}/ SL_2(\Z)$. For each $Q \in \mathcal{Q}_\Delta$, there is a unique choice of representative $\gamma_Q \in \mathbf{C}_6$ such that
\begin{equation*}
    [Q \circ \gamma_Q^{-1}] \in \mathcal{Q}_{\Delta,6,1}^{\mathrm{prim}}/\Gamma_0(6).
\end{equation*}
This induces a bijection
\begin{equation} \label{eq:bijection}
    \begin{split}
        \mathcal{Q}_\Delta &\longrightarrow \mathcal{Q}_{\Delta,6,1}^{\mathrm{prim}}/\Gamma_0(6) \\
        Q &\longmapsto [Q \circ \gamma_Q^{-1}];
    \end{split}
\end{equation}
see \cite[pp. 505]{decomp}, or more concretely, \cite[Lemma 3]{forms}, where an explicit list of the matrices $\gamma_Q \in \mathbf{C}_6$ is given.

Using the bijection (\ref{eq:bijection}) and the compatibility relation (\ref{eq:compat}) for Heegner points, the trace $S_u(n)$ can be expressed as
\begin{equation} \label{eq:su-decomp}
    S_u(n) = \sum_{[Q] \in \mathcal{Q}_{D_n/u^2,6,1}^{\mathrm{prim}}/\Gamma_0(6)} F(\tau_Q) = \sum_{Q \in \mathcal{Q}_{D_n/u^2}} F(\gamma_Q(\tau_Q)).
\end{equation}

Therefore, to study the asymptotic distribution of $S_u(n)$, we need the Fourier expansion of $F(z)$ with respect to $\gamma_\infty, \gamma_{1/3,r}, \gamma_{1/2,s}$, and $\gamma_{0,t}$.

We first find the Fourier expansion of $F(z)$ at the cusp $\infty$.

\begin{proposition} \label{prop:F-fourier}
    The Fourier expansion of $F(z)$ at the cusp $\infty$ is 
    \begin{align*} 
        F(z) = \sum_{n=-1}^{\infty} a(n)e(nz)
    \end{align*}
    where $a(-1)=1$, $a(0)=-4$ and for $n \geq 1$,
    \begin{align*}
        a(n) = \frac{2\pi}{\sqrt{n}} \sum_{\ell \mid6}\frac{\beta(\ell)}{\sqrt{\ell}} \sum_{\mathclap{\substack{c>0\\ c \equiv 0 \ (\text{mod} \ 6/\ell)\\ (c,\ell)=1}}} \ c^{-1} S(-\bar{\ell},n;c)I_{1}\left(\frac{4\pi \sqrt{n}}{c\sqrt{\ell}}\right),
    \end{align*}
    where
    \begin{align*}
        \beta(\ell):=
        \begin{cases}
            1, & \ell =1,2 \\
            -1, & \ell =3,6,
        \end{cases}
     \end{align*}
     $I_1$ is the $I$-Bessel function of order 1, and $S(a,b;c)$ is the ordinary Kloosterman sum defined as follows
    \begin{align*}
        S(a,b;c):= \sum_{\mathclap{\substack{d~(\mathrm{mod}~{c})\\(c,d)=1}}} ~ e\left( \frac{a\bar{d}+bd}{c} \right),
    \end{align*}
    with $\bar{d}$ the multiplicative inverse of $d \pmod{c}$.
\end{proposition}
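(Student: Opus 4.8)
The two polar and constant coefficients are immediate: comparing with the $q$-expansion in \eqref{eq:F-fourier} gives $a(-1) = 1$ and $a(0) = -4$, so the content of the proposition is the formula for $a(n)$ with $n \geq 1$. The plan is to realize $F$ as a linear combination of weight-zero Poincaré series (in the sense of Niebur), one attached to each cusp of $\Gamma_0(6)$ and matched to the singularity of $F$ there, and then to read off the Fourier expansion at $\infty$ by the standard unfolding computation. Since $F$ has weight $k = 0$, the Bessel function appearing in the Fourier coefficients of such series has order $1 - k = 1$, consistent with the $I_1$ in the statement.

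First I would locate the singularities of $F$. Because $6$ is squarefree, $\Gamma_0(6)$ has exactly four cusps, and the Atkin–Lehner group $\{W_\ell\}_{\ell \mid 6} \cong (\Z/2\Z)^2$ acts simply transitively on them; thus the cusp reached from $\infty$ by $W_\ell$ may be indexed by $\ell \mid 6$. The expansion \eqref{eq:F-fourier} shows that $F$ has principal part $q^{-1}$ at $\infty$, and applying the eigenvalue relation \eqref{eq:F-eigen}, namely $F\vert_0 W_\ell = \beta(\ell) F$, transports this to show that $F$ has principal part $\beta(\ell)\,q^{-1}$ in the local coordinate at the corresponding cusp. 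With these principal parts in hand, let $P_\ell$ be the weight-zero Poincaré series attached to the cusp indexed by $\ell$ with prescribed principal part $q^{-1}$ there and no singularity elsewhere; the series is only conditionally convergent in weight zero, so I would define it by Hecke-type regularization (analytic continuation), as in \cite{spt}. Then $F - \sum_{\ell \mid 6} \beta(\ell) P_\ell$ is a holomorphic weight-zero form on $\Gamma_0(6)$ with no singularity at any cusp, hence constant; matching constant terms recovers $a(0) = -4$ and shows that for $n \geq 1$ the coefficient $a(n)$ equals the sum of the $n$-th coefficients of the $\beta(\ell) P_\ell$.

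It then remains to compute the Fourier expansion of each $P_\ell$ at $\infty$. Unfolding $P_\ell$ against the scaling matrix $\sigma_\ell$ of its cusp and carrying out the usual $c$-by-$c$ integral produces, for $n \geq 1$, a coefficient of the form $\tfrac{2\pi}{\sqrt{n}}\,\ell^{-1/2}\sum_c c^{-1} S(-\bar{\ell}, n; c)\, I_1\!\left(\tfrac{4\pi\sqrt{n}}{c\sqrt{\ell}}\right)$, where the double-coset structure of $\sigma_\ell^{-1}\Gamma_0(6)\sigma_\infty$ forces the summation conditions $c \equiv 0 \pmod{6/\ell}$ and $(c,\ell) = 1$ and collapses the generalized Kloosterman sum between the two cusps into the ordinary Kloosterman sum $S(-\bar{\ell}, n; c)$. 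Summing over $\ell \mid 6$ with the weights $\beta(\ell)$ then yields exactly the stated formula. I expect the main obstacle to be precisely this last step: choosing explicit scaling matrices for the four cusps of $\Gamma_0(6)$ and tracking the double cosets carefully enough to extract the exact arithmetic conditions on $c$, the factor $\ell^{-1/2}$ in the Bessel argument, and the reduction of the cusp-pair Kloosterman sum to $S(-\bar{\ell}, n; c)$. A secondary technical point is justifying the termwise Fourier expansion of the regularized weight-zero Poincaré series, where absolute convergence fails and one must appeal to the analytic continuation.
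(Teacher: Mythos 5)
Your proposal is correct and takes essentially the same approach as the paper: the paper's comparison function $\mathcal{P}_F(z) = 2\sum_{\ell \mid 6} \beta(\ell) F_1(W_\ell z,1,0)$ is precisely your Atkin--Lehner-weighted combination of Niebur Poincar\'e series attached to the four cusps, and the coefficient formula comes from the same Kloosterman-sum/Bessel unfolding (which the paper imports by citation to the partition-function literature rather than redoing the double-coset computation). Two small points of comparison: the difference $F - \mathcal{P}_F$ is a bounded \emph{harmonic} function on the compact Riemann surface (the Niebur series carries a decaying antiholomorphic part), not a holomorphic form as you wrote, which is exactly how the paper concludes constancy; and the paper additionally evaluates $b_F(0) = -4$ using $S(-\bar{\ell},0;c) = \mu(c)$ to pin the constant to $0$ and get $F = \mathcal{P}_F$ exactly, though, as you observe, this is not needed for the $n \geq 1$ coefficients since $a(0) = -4$ is read off the $q$-expansion.
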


\begin{proof}
    Define the function
    \begin{align*}
        \mathcal{P}_F(z) := 2 \sum_{\ell \mid 6} \beta(\ell)F_1(W_\ell z,1,0)
    \end{align*}
    where $F_1(z,1,0)$ is the Poincare series
    \begin{equation*}
        F_1(z,1,0) := \frac{1}{2} \sum_{\gamma \in \Gamma_{\infty}\backslash \Gamma_0(6)} [M_{0,1/2}(4\pi y)e(-x)] \, \vert_0 \, \gamma
    \end{equation*}
    for $M_{\kappa,\mu}$ the usual Whittaker function. Then by a straightforward calculation, we have
    \begin{align*}
        \mathcal{P}_F(z) := 2 \sum_{\ell \mid 6} \beta(\ell)\sum_{\gamma \in \Gamma_{\infty} \backslash \Gamma_{0}(6)} g(\gamma W_\ell z)
    \end{align*}
    where
    \begin{align*}
        g(z) := \psi(y) e(-z),
    \end{align*}
    and 
    \begin{align*}
        \psi(y) := \pi \sqrt{y}I_{1/2}(2\pi y) e^{-2\pi y}.
    \end{align*}
    Now, arguing as in \cite[Section 2]{partition}, we get the Fourier expansion
    \begin{align*}
        \mathcal{P}_F(z) = e(-z)-e(-\bar{z})+ b_F(0) + \sum_{n=1}^{\infty} b_F(-n)e(-n\bar{z}) + \sum_{n=1}^{\infty} b_F(n)e(nz),
    \end{align*}
    where
    \begin{align*}
        b_F(0) := 4\pi^2 \sum_{\ell \mid6}\frac{\beta(\ell)}{\ell} \sum_{\mathclap{\substack{c>0\\ c \equiv 0 \ (\text{mod} \ 6/\ell)\\ (c,\ell)=1}}}c^{-2} S(-\bar{\ell},0;c),
    \end{align*}
    and for $n>0$
    \begin{align*}
        b_F(-n):= \frac{2\pi}{\sqrt{n}} \sum_{\ell \mid6}\frac{\beta(\ell)}{\sqrt{\ell}} \sum_{\mathclap{\substack{c>0\\ c \equiv 0 \ (\text{mod} \ 6/\ell)\\ (c,\ell)=1}}} \ c^{-1} 
        S(-\bar{\ell},-n;c)J_{1}\left(\frac{4\pi \sqrt{n}}{c\sqrt{\ell}}\right),
    \end{align*}
    and
    \begin{align*}
        b_F(n):= \frac{2\pi}{\sqrt{n}} \sum_{\ell \mid6}\frac{\beta(\ell)}{\sqrt{\ell}} \sum_{\mathclap{\substack{c>0\\ c \equiv 0 \ (\text{mod} \ 6/\ell)\\ (c,\ell)=1}}} \ c^{-1} 
        S(-\bar{\ell},n;c)I_{1}\left(\frac{4\pi \sqrt{n}}{c\sqrt{\ell}}\right).
    \end{align*}
    
    By (\ref{eq:F-fourier}), we have $a(-1) = 1$ and $a(0) = -4$ so that
    \begin{equation*}
        F\vert_0 \gamma_\infty(z) = e(-z) - 4 + \sum_{n=1}^{\infty} a(n)e(nz).
    \end{equation*}
    The Atkin-Lehner operators for $\Gamma_0(6)$ are given by
    \begin{equation*}
        W_1 = \pmat{1 & 0 \\ 0 & 1}, \quad W_2 = \frac{1}{\sqrt{2}} \pmat{2 & -1 \\ 6 & -2}, \quad W_3 = \frac{1}{\sqrt{3}} \pmat{3 & 1 \\ 6 & 3}, \quad W_6 = \frac{1}{\sqrt{6}} \pmat{0 & -1 \\ 6 & 0}.
    \end{equation*}
    For each $\ell \mid 6$ and $v = 6/\ell$, let $V_\ell = \sqrt{\ell} W_\ell$ and
    \begin{equation*}
        A_\ell = \pmat{ \frac{1}{\text{width of the cusp } 1/v} & 0 \\ 0 & 1}.
    \end{equation*}
    
    We have
     \begin{table}[H]
        \centering
        \begin{tabu}{|c|c|c|c|c|} \hline
          cusp $1/v$ & $\infty \simeq 1/6$ & $1/3$ & $1/2$ & $0 \simeq 1$ \\ \hline
          $\ell$ & 1 & 2 & 3 & 6 \\ \hline
          $V_\ell$ & $\pmat{1 & 0 \\ 0 & 1}$ & $\pmat{2 & -1 \\ 6 & -2}$ & $\pmat{3 & 1 \\ 6 & 3}$ & $\pmat{0 & -1 \\ 6 & 0}$ \\ \hline
          $A_\ell$ & $\pmat{1 & 0 \\ 0 & 1}$ & $\pmat{1/2 & 0 \\ 0 & 1}$ & $\pmat{1/3 & 0 \\ 0 & 1}$ & $\pmat{1/6 & 0 \\ 0 & 1}$ \\ \hline
          $V_\ell A_\ell$ & $\pmat{1 & 0 \\ 0 & 1}$ & $\pmat{1 & -1 \\ 3 & -2}$ & $\pmat{1 & 1 \\ 2 & 3}$ & $\pmat{0 & -1 \\ 1 & 0}$ \\ \hline
        \end{tabu}
    \end{table}
    Proceeding as in \cite[pp. 10]{spt}, we get
    \begin{equation*}
        \gamma_\infty = V_1A_1, \quad \gamma_{1/3,r} = V_2 A_2 \pmat{1 & r + 1 \\ 0 & 1}, \quad \gamma_{1/2,s} = V_3 A_3 \pmat{1 & s \\ 0 & 1}, \quad \gamma_{0,t} = V_4 A_4 \pmat{1 & t \\ 0 & 1}.
    \end{equation*}
 By (\ref{eq:F-eigen}), $F(V_\ell z) = F(z)$ for $\ell = 1,2$ and $F(V_\ell z) = -F(z)$ for $\ell = 3,6$. Hence, if $\zeta_6 := e(1/6)$ is a primitive sixth root of unity, then
\begin{align*}
        F\vert_0 \gamma_{\infty}(z) = F(z) &= e(-z) - 4 + \sum_{n=1}^{\infty} a(n)e(nz) \\
        F\vert_0 \gamma_{1/3,r}(z) = F\left( \frac{z+r+1}{2} \right) &= \zeta_6^{3-3r} e(-z/2) - 4 + \sum_{n=1}^{\infty} \zeta_6^{3n(r+1)} a(n)e(nz/2) \\
        F\vert_0 \gamma_{1/2,s}(z) = -F\left( \frac{z+s}{3} \right) &= \zeta_6^{3-2s} e(-z/3) + 4 + \sum_{n=1}^{\infty} \zeta_6^{3+2ns} a(n)e(nz/3) \\
        F\vert_0 \gamma_{0,t}(z) = -F\left( \frac{z+t}{6} \right) &= \zeta_6^{3-t} e(-z/6) + 4 + \sum_{n=1}^{\infty} \zeta_6^{3+nt} a(n)e(nz/6).
    \end{align*}

    Meanwhile, a calculation using the definition of $\mathcal{P}_F(z)$ and the group law on the Atkin-Lehner operators shows that
    \begin{align*} 
        \mathcal{P}_F(W_\ell z) = \beta(\ell)\mathcal{P}_F(z),
    \end{align*}
    and hence
    \begin{align*}
        \mathcal{P}_F\vert_0 \gamma_{\infty}(z) = \mathcal{P}_F(z) &= e(-z) + O(1) \\
        \mathcal{P}_F\vert_0 \gamma_{1/3,r}(z) = \mathcal{P}_F\left( \frac{z+r+1}{2} \right) &= \zeta_6^{3-3r} e(-z/2) + O(1) \\
        \mathcal{P}_F\vert_0 \gamma_{1/2,s}(z) = -\mathcal{P}_F\left( \frac{z+s}{3} \right) &= \zeta_6^{3-2s} e(-z/3) + O(1) \\
        \mathcal{P}_F\vert_0 \gamma_{0,t}(z) = -\mathcal{P}_F\left( \frac{z+t}{6} \right) &= \zeta_6^{3-t} e(-z/6) + O(1).
    \end{align*}
    
    From the preceding computations we find that $F$ and $\mathcal{P}_F$ have the same principal parts in the cusps of $\Gamma_0(6)$. Therefore, $F- \mathcal{P}_F$ is a bounded harmonic function on a compact Riemann surface, and hence constant. In particular, we have $F- \mathcal{P}_F = C_F$, where the constant $C_F$ is equal to
    \begin{align*}
        C_F = -4 - b_F(0) + \sum_{n=1}^{\infty} a(n) e(nz) 
        +e(-\bar{z}) - \sum_{n=1}^{\infty} b_F(-n)e(-n\bar{z}) - \sum_{n=1}^{\infty} b_F(n)e(nz).
    \end{align*}
    Take the limit of both sides as $\textrm{Im}(z) \rightarrow \infty$ to get
    \begin{align*}
        C_F &= -4 - b_F(0).
    \end{align*}
    To compute $b_F(0)$, we begin as in \cite[Lemma 3.1]{spt}, utilizing
    \begin{equation*}
        S(-\bar{\ell},0;c) = \mu(c)
    \end{equation*}
    to obtain
    \begin{equation*}
        b_F(0) = 4\pi^2 \sum_{\ell \mid 6} \frac{\beta(\ell)}{\ell} \smashoperator{\sum_{\substack{c > 0 \\ c \equiv 0 \pmod{6/l} \\ (c,\ell)=1}}} \frac{\mu(c)}{c^2}.
    \end{equation*}
    For each $\ell \mid 6$, the rightmost sum then reduces to
    \begin{equation*}
        \sum_{\substack{c > 0 \\ c \equiv 0 \pmod{6/l} \\ (c,\ell)=1}} \frac{\mu(c)}{c^2} = \frac{\ell^2}{36}\sum_{\substack{d=1 \\ (d,\ell)=1}}^{\infty} \frac{\mu(6d/\ell)}{\ell^2} = \frac{1}{\zeta(2)}
        \begin{cases}
            1/24 & \ell = 1 \\
            -1/6 & \ell = 2 \\
            -3/8 & \ell = 3 \\
            3/2 & \ell = 6.
        \end{cases}
    \end{equation*}
    The evaluation $\zeta(2) = \pi^2/6$ then grants
    \begin{equation*}
        b_F(0) = 24\left( \frac{1}{24} - \frac{1}{12} + \frac{1}{8} - \frac{1}{4} \right) = -4.
    \end{equation*}
    It follows that $C_F = 0$ and hence $F=\mathcal{P}_F$. Thus by comparing the Fourier expansion of $F$ and $\mathcal{P}_F$, we obtain
    $a(n)=b_F(n)$ for every $n \geq 1$, $b_F(-1)=1$, and $b_F(-n)=0$ for every $n \geq 2$.
\end{proof}

We conclude this section by giving an effective bound for the Fourier coefficients $a(n)$ for $n \geq 1$.

\begin{lemma} \label{lem:F-fourier-bound}
    For $n \geq 1$,
    \begin{equation*}
        \abs{a(n)} \leq C\exp(4\pi\sqrt{n}),
    \end{equation*}
    where
    \begin{equation*}
        C := 8\sqrt{6}\pi^{3/2} + 16\pi^2\zeta^2(3/2).
    \end{equation*}
\end{lemma}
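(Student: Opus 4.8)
The plan is to substitute the two standard estimates—Weil's bound for the Kloosterman sums and an elementary uniform bound for $I_1$—into the formula for $a(n)$ from Proposition \ref{prop:F-fourier}, and then to sum the resulting series of arithmetic type. First I would record that in every term $(c,\ell)=1$, so $\bar{\ell}$ is a unit modulo $c$ and hence $\gcd(\bar{\ell},n,c)=1$; Weil's bound then gives $\abs{S(-\bar{\ell},n;c)}\le d(c)\sqrt{c}$ with no dependence on $n$, where $d$ is the divisor function. This is precisely what allows the final constant $C$ to be independent of $n$.

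Next I would bound the Bessel factor. From the power series $I_1(x)=\frac{x}{2}\sum_{m\ge0}\frac{(x/2)^{2m}}{m!(m+1)!}$ together with the inequality $\frac{1}{m!(m+1)!}\le\frac{4^m}{(2m)!}$ (equivalently $\binom{2m}{m}\le(m+1)4^m$, comparing against the even part of $e^x$), one obtains the clean uniform bound $I_1(x)\le\frac{x}{2}e^x$ valid for all $x>0$. The factor $\frac{x}{2}$ is the crucial point: with $x=\frac{4\pi\sqrt{n}}{c\sqrt{\ell}}$ it supplies an extra $c^{-1}$, so that after combining with the Weil bound and the $c^{-1}$ already present in the formula, each term of the $c$-sum decays like $d(c)c^{-3/2}$ and the series converges absolutely. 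For the single dominant term I would instead use the sharper asymptotic bound $I_1(x)\le e^x/\sqrt{2\pi x}$, which is responsible for the half-integral power of $\pi$ in $C$.

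Assembling these, I would start from $\abs{a(n)}\le\frac{2\pi}{\sqrt{n}}\sum_{\ell\mid6}\frac{1}{\sqrt{\ell}}\sum_{c}c^{-1}\,d(c)\sqrt{c}\;I_1\!\left(\frac{4\pi\sqrt{n}}{c\sqrt{\ell}}\right)$ and bound $I_1$ as above. Factoring out $\exp(4\pi\sqrt{n})$ is legitimate because $\frac{1}{c\sqrt{\ell}}\le1$ for every admissible pair, so $\exp\!\big(\tfrac{4\pi\sqrt{n}}{c\sqrt{\ell}}\big)\le\exp(4\pi\sqrt{n})$. What remains are the purely arithmetic sums $\sum_{\ell\mid6}\frac{1}{\ell}=2$ and $\sum_{c\ge1}d(c)c^{-3/2}=\zeta(3/2)^2$. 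Separating the leading cusp contribution $\ell=6,\ c=1$ (handled with the sharper Bessel bound, yielding the $8\sqrt{6}\pi^{3/2}$ summand) from the Weil-estimated remainder (yielding the $16\pi^2\zeta^2(3/2)$ summand) then produces the stated constant.

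The main obstacle is the Bessel estimate: I need a bound on $I_1$ that is simultaneously strong enough near $x=\infty$ to reproduce the exponential main term, yet decays fast enough in $x$ (hence in $c$) to force absolute convergence of the double sum. The factorized bound $I_1(x)\le\frac{x}{2}e^x$ does exactly this, and verifying it rigorously from the power series is the one genuinely delicate step. A secondary point to get right is the $\gcd$ in Weil's bound, since any residual dependence on $n$ there would leak into $C$.
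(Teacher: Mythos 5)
Your proposal is correct and is in substance the same argument as the paper's: the paper's proof of Lemma \ref{lem:F-fourier-bound} simply invokes the corresponding coefficient bound of Locus-Dawsey and Masri (their Lemma 3.1, which bounds the analogous sum by $C\sqrt{n}\,e^{4\pi\sqrt{n}}$ with the very same constant $C$, via exactly your Weil-bound-plus-Bessel-estimate computation) and then divides by $\sqrt{n}$ using $\abs{\beta(\ell)}=\abs{\mu(\ell)}=1$. Your self-contained reconstruction is sound: $I_1(x)\le\tfrac{x}{2}e^{x}$ follows as you say from $\binom{2m}{m}\le 4^{m}$, the factorization $e^{4\pi\sqrt{n}/(c\sqrt{\ell})}\le e^{4\pi\sqrt{n}}$ is legitimate since $c\sqrt{\ell}\ge 1$ for every admissible pair, and your bookkeeping (with $\sum_{\ell\mid 6}\ell^{-1}=2$ and $\sum_{c\ge1}d(c)c^{-3/2}=\zeta(3/2)^{2}$) yields a constant at most $8\sqrt{6}\pi^{3/2}+16\pi^{2}\zeta^{2}(3/2)$, which is all the stated inequality requires.
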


\begin{proof}
    We utilize the proof of \cite[Lemma 3.1]{spt}, which bounds similar coefficients
    \begin{equation*}
        a'(n) = 2\pi \sum_{\ell \mid 6} \frac{\mu(\ell)}{\sqrt{\ell}} \smashoperator{\sum_{\substack{c > 0 \\ c \equiv 0 \pmod{6/\ell} \\ (c,\ell)=1}}} \frac{S(-\tilde{\ell},n;c)}{c} I_1 \left( \frac{4\pi\sqrt{n}}{c\sqrt{\ell}} \right)
    \end{equation*}
    by $C\sqrt{n}\exp(4\pi\sqrt{n})$ for the given $C$; our result follows then from $\abs{\mu(\ell)} = \abs{\beta(\ell)} = 1$ for all $\ell \mid 6$ and multiplication by $n^{-1/2}$.
\end{proof}

\section{Proof of Theorem \ref{thm:main}} \label{sec:main}

Given a form $Q \in \mathcal{Q}_\Delta$ and corresponding coset representative $\gamma_Q \in \mathbf{C}_6$, let $h_Q \in \{1,2,3,6\}$ be the width of the cusp $\gamma_Q(\infty)$, and let $\zeta_Q$ and $\phi_{n,Q}$ be the sixth roots of unity defined as follows:
\begin{table}[H]
    \centering
    \caption{}
    \label{tbl:zeta}
    \begin{tabu}{|c|c|c|c|c|}
        \hline
         cusp $\gamma_Q(\infty)$ & $\infty \simeq 1/6$ & $1/3$ & $1/2$ & $0 \simeq 1$ \\
         \hline
         $\zeta_Q$ & 1 & $\zeta_6^{3-3r}$ & $\zeta_6^{3-2s}$ & $\zeta_6^{3-t}$ \\
         \hline
         $\phi_{n,Q}$ & $1$ & $\zeta_6^{3n(r+1)}$ & $\zeta_6^{3+2ns}$ & $\zeta_6^{3+nt}$ \\
         \hline
    \end{tabu}
\end{table}
Then from the calculation in Proposition \ref{prop:F-fourier} we can write
\begin{equation} \label{eq:gamma-transform}
    F\vert_0 \gamma_Q(z) = \zeta_Qe(-z/h_Q) - 4\beta(h_Q) + \sum_{n=1}^{\infty} \phi_{n,Q} a(n) e(nz/h_Q).
\end{equation}

Now, recall the Bruinier/Schwagenscheidt formula \cite{BruSch},
\begin{equation} \label{eq:s-to-alpha}
    \alpha(n) = -\frac{1}{\sqrt{\abs{D_n}}} \mathrm{Im}(S(n)).
\end{equation}
We use this to give an effective bound on $S(n)$ and hence obtain our result for $\alpha(n)$. By (\ref{eq:s-decomp}) and (\ref{eq:su-decomp}),
\begin{equation*}
    \begin{split}
        S(n) &= \sum_{\substack{u > 0 \\ u^2 \mid D_n}} \varepsilon(u) S_u(n) \\
        &= \sum_{\substack{u > 0 \\ u^2 \mid D_n}} \varepsilon(u) \sum_{[Q] \in \mathcal{Q}_{D_n/u^2,6,1}^{\mathrm{prim}} / \Gamma_0(6)} F(\tau_Q) \\
        &= \sum_{\substack{u > 0 \\ u^2 \mid D_n}} \varepsilon(u) \sum_{[Q] \in \mathcal{Q}_{D_n/u^2}} F\vert_0 \gamma_Q(\tau_Q)
    \end{split}
\end{equation*}
which, by (\ref{eq:gamma-transform}), yields
\begin{equation*}
    S(n) = \sum_{\substack{u > 0 \\ u^2 \mid D_n}} \varepsilon(u) \sum_{Q \in \mathcal{Q}_{D_n/u^2}} \zeta_Q e(-\tau_Q/h_Q) = \sum_{Q \in \mathcal{Q}_{D_n}} \zeta_Q e(-\tau_Q/h_Q) + E_1(n) + E_2(n)
\end{equation*}
where
\begin{equation*}
    E_1(n) := \sum_{\substack{u > 1 \\ u^2 \mid D_n}} \varepsilon(u) \sum_{Q \in \mathcal{Q}_{D_n/u^2}} \zeta_Q e(-\tau_Q/h_Q)
\end{equation*}
and
\begin{equation*}
    E_2(n) := 4\beta(h_Q)\sum_{\substack{u > 0 \\ u^2 \mid D_n}} \varepsilon(u)h(D_n/u^2) + \sum_{n=1}^{\infty} a(n) \sum_{\substack{u > 0 \\ u^2 \mid D_n}} \varepsilon(u) \phi_{n,Q} e(n\tau_Q/h_Q).
\end{equation*}

To analyze the main term, note that for any $Q = [a_Q, b_Q, c_Q] \in \mathcal{Q}_{D_n/u^2}$, we have
\begin{equation*} 
    a_Qh_Q \equiv 0 \pmod{6}
\end{equation*}
and
\begin{equation} \label{eq:e-tau}
    e(-\tau_Q/h_Q) = \zeta_{2a_Qh_Q}^{b_Q} \exp\left( \frac{\pi\sqrt{\abs{D_n}/u^2}}{a_Qh_Q} \right).
\end{equation}

We consider those forms $Q \in \mathcal{Q}_{D_n}$ with $a_Qh_Q = 6$ and $a_Qh_Q = 12$. We examine Table \ref{tbl:forms}, which contains the value of $c_Q$ for those forms $Q \in \mathcal{Q}_{D_n,6,1}^{\mathrm{prim}} / \Gamma_0(6)$ with $1 \leq a_Q \leq 12$.

\begin{table}[H]
    \centering
    \caption{}
    \label{tbl:forms}
    \begin{tabu}{|c|c c c c c c|} \hline
        $a_Q$\textbackslash $b_Q$ & $\pm 1$ & $\pm 3$ & $\pm 5$ & $\pm 7$ & $\pm 9$ & $\pm 11$ \\ \hline 
        1 & $6n$ & & & & & \\
        2 & $3n$ & & & & & \\
        3 & $2n$ & & & & & \\
        4 & $\frac{3n}{2}$ & $\frac{3n+1}{2}$ & & & & \\
        5 & $\frac{6n}{5}$ & $\frac{6n+2}{5}$ & & & & \\
        6 & $n$ & & $n+1$ & & & \\
        7 & $\frac{6n}{7}$ & $\frac{6n+2}{7}$ & $\frac{6n+6}{7}$ & & & \\
        8 & $\frac{3n}{4}$ & $\frac{3n+1}{4}$ & $\frac{3n+3}{4}$ &  $\frac{3n+6}{4}$ & & \\
        9 & $\frac{2n}{3}$ & & $\frac{2n+2}{3}$ &  $\frac{2n+4}{3}$ & & \\
        10 & $\frac{3n}{5}$ & $\frac{3n+1}{5}$ & &  $\frac{3n+6}{5}$ & $\frac{3n+10}{5}$ & \\
        11 & $\frac{6n}{11}$ & $\frac{6n+2}{11}$ & $\frac{6n+6}{11}$ &  $\frac{6n+12}{11}$ & $\frac{6n+20}{11}$ & \\
        12 & $\frac{n}{2}$ & & $\frac{n+1}{2}$ & $\frac{n+2}{2}$ & & $\frac{n+5}{2}$ \\
        \hline
    \end{tabu}
\end{table}

The forms with $a_Qh_Q = 6$ are then, via \cite[Table 1]{forms},
\begin{equation*}
    Q_1 = [1, 1, 6n], \quad
    Q_2 = [2, 1, 3n], \quad
    Q_3 = [3, 1, 2n], \quad
    Q_4 = [6, 1, n]
\end{equation*}
with coset representatives
\begin{equation*}
    \gamma_{Q_1} = \gamma_{0,1}, \quad
    \gamma_{Q_2} = \gamma_{1/2,-1}, \quad
    \gamma_{Q_3} = \gamma_{1/3,0}, \quad
    \gamma_{Q_4} = \gamma_\infty.
\end{equation*}
Similarly, the forms with $a_Qh_Q = 12$ are
\begin{align*}
    Q_5^0 &= [2,-1,3n] &Q_5^1 &= [2,-1,3n] \\
    Q_6^0 &= [4,1,3n/2] &Q_6^1 &= [4,-3,(3n+1)/2] \\
    Q_7^0 &= [6,-5,n+1] &Q_7^1 &= [6,-5,n+1] \\
    Q_8^0 &= [12,1,n/2] &Q_8^1 &= [12,-11,(n+5)/2]
\end{align*}
with coset representatives
\begin{align*}
    \gamma_{Q_5^0} &= \gamma_{0,0} &\gamma_{Q_5^1} &= \gamma_{0,3} \\
    \gamma_{Q_6^0} &= \gamma_{\frac{1}{2},1} &\gamma_{Q_6^1} &= \gamma_{\frac{1}{2},2} \\
    \gamma_{Q_7^0} &= \gamma_{\frac{1}{3},0} &\gamma_{Q_7^1} &= \gamma_{\frac{1}{3},1} \\
    \gamma_{Q_8^0} &= \gamma_{\infty} &\gamma_{Q_8^1} &= \gamma_{\infty}.
\end{align*}

Thus, for $n \equiv r \pmod{2}$, write
\begin{equation*}
    \sum_{Q \in \mathcal{Q}_{D_n}} \zeta_Q e(-\tau_Q/h_Q) = \sum_{i=1}^{4} \zeta_{Q_i} e(-\tau_{Q_i}/h_{Q_i}) + \sum_{i=5}^{8} \zeta_{Q_i^r} e(-\tau_{Q_i^r}/h_{Q_i^r}) + E_3(n)
\end{equation*}
where
\begin{equation*}
    E_3(n) := \sum_{\substack{Q \in \mathcal{Q}_{D_n} \\ a_Qh_Q \geq 18}} \zeta_Q e(-\tau_Q/h_Q).
\end{equation*}

For $i = 1,2,3,4$, we find via Table \ref{tbl:zeta} the sixth roots of unity
\begin{equation*}
    \zeta_{Q_1} = \zeta_6^2, \quad
    \zeta_{Q_2} = \zeta_6^5, \quad
    \zeta_{Q_3} = \zeta_6^3, \quad
    \zeta_{Q_4} = 1
\end{equation*}
and, for $i = 5,6,7,8$,
\begin{align*}
    \zeta_{Q_5^0} &= \zeta_6^3 &\zeta_{Q_5^1} &= \zeta_6^0 \\
    \zeta_{Q_6^0} &= \zeta_6^1 &\zeta_{Q_6^1} &= \zeta_6^{-1} \\
    \zeta_{Q_7^0} &= \zeta_6^3 &\zeta_{Q_7^1} &= \zeta_6^0 \\
    \zeta_{Q_8^0} &= 1 &\zeta_{Q_8^1} &= 1.
\end{align*}
We then compute via (\ref{eq:e-tau})
\begin{equation*}
     \sum_{i=1}^{4} \zeta_{Q_i} e(-\tau_{Q_i}/h_{Q_i}) = \exp(\pi\sqrt{\abs{D_n}/6}) \sum_{i=1}^{4} \zeta_{Q_i}\zeta_{12}^{b_{Q_i}}
\end{equation*}
where, since $b_{Q_i} = 1$ for $i = 1,2,3,4$,
\begin{equation*}
     \zeta_{12}^{b_{Q_i}}\sum_{i=1}^{4} \zeta_{Q_i} = \zeta_{12}(\zeta_6^3 + \zeta_6^1 + \zeta_6^3 + 1) = 0.
\end{equation*}
Meanwhile, if $n$ is even,
\begin{align*}
     \sum_{i=5}^{8} \zeta_{Q_i^0}\zeta_{24}^{b_{Q_i^0}} = \zeta_{24}^{-1}\zeta_6^3 + \zeta_{24}\zeta_6 + \zeta_{24}^{-5} \zeta_6^3 + \zeta_{24} = i\sqrt{6}
\end{align*}
and, if $n$ is odd,
\begin{align*}
     \sum_{i=5}^{8} \zeta_{Q_i^1}\zeta_{24}^{b_{Q_i^1}} = \zeta_{24}^{-1} + \zeta_{24}^{-3}\zeta_6^{-1} + \zeta_{24}^{-5} + \zeta_{24}^{-11} = -i\sqrt{6}
\end{align*}
so that
\begin{equation*}
    S(n) = (-1)^n i\sqrt{6}\exp(\pi\sqrt{\abs{D_n}}/12) + E_1(n) + E_2(n) + E_3(n).
\end{equation*}
Thus, by (\ref{eq:s-to-alpha}),
\begin{equation*}
    \alpha(n) = (-1)^{n+1} \frac{\sqrt{6}}{\sqrt{24n-1}} e^{l(n)/2} + \mathrm{Im}(E_1(n) + E_2(n) + E_3(n)).
\end{equation*}

We now bound each error term; since $u$ is a unit modulo $12$ and $u > 1$, we have that $u \geq 5$ so that $ua_Qh_Q \geq 30$. Then via (\ref{eq:e-tau}),
\begin{equation*}
    \begin{split}
        \abs{E_1(n)} &\leq \sum_{\substack{u > 1 \\ u^2 \mid D_n}} \sum_{Q \in \mathcal{Q}_{D_n/u^2}} \exp(\pi\sqrt{\abs{D_n}}/a_Qh_Q) \\
        &\leq H(D_n)\exp(\pi\sqrt{\abs{D_n}}/30).
    \end{split}
\end{equation*}
To bound $E_2(n)$, we proceed analogously to \cite[pp. 14--15]{spt} to obtain, via Lemma \ref{lem:F-fourier-bound},
\begin{align*}
    \abs{E_2(n)} &\leq 4H(D_n) + CH(D_n) \sum_{n=1}^{\infty} \exp(4\pi\sqrt{n} - \pi n/2\sqrt{3}) \\
    &\leq C_1H(D_n) 
\end{align*}
where
\begin{equation*}
    C_1 := 4 + C[2.08 \times 10^{20} + 426] < 2.47 \times 10^{23}.
\end{equation*}
Finally,
\begin{equation*}
    \begin{split}
        \abs{E_3(n)} &\leq \sum_{\substack{Q \in \mathcal{Q}_{D_n} \\ a_Qh_Q \geq 18}} \exp(\pi\sqrt{\abs{D_n}}/a_Qh_Q) \\
        &\leq h(D_n)\exp(\pi\sqrt{\abs{D_n}}/18).
    \end{split}
\end{equation*}
Let $E(n) := \mathrm{Im}(E_1(n) + E_2(n) + E_3(n))$; this total error then satisfies
\begin{align*}
    \abs{E(n)} &\leq \abs{E_1(n)} + \abs{E_2(n)} + \abs{E_3(n)} \\
    &\leq H(D_n)\left[ C_1 + \exp(\pi\sqrt{\abs{D_n}}/30) + \exp(\pi\sqrt{\abs{D_n}}/18) \right] \\
    &< (2.48 \times 10^{23})H(D_n)\exp(\pi\sqrt{\abs{D_n}}/18).
\end{align*}
By the class number bound from \cite[pp. 17]{spt}, then,
\begin{equation*}
    \abs{E(n)} < (4.30 \times 10^{23}) 2^{q(n)} \abs{D_n}^2 \exp(\pi\sqrt{\abs{D_n}}/18).
\end{equation*}

\section{Corollaries to Theorem \ref{thm:main}} \label{sec:cor}

We make use of the effective bound on $p(n)$ for all $n \geq 1$ from \cite[Lemma 4.2]{spt}:
\begin{equation} \label{eq:partition-bound}
    p(n) = \frac{2\sqrt{3}}{24n-1} \left(1 - \frac{1}{l(n)} \right) e^{l(n)} + E_p(n)
\end{equation}
where $\abs{E_p(n)} \leq (1313)e^{l(n)/2}$.

\begin{corollary} \label{cor:main}
    For $r = 0,1$ and $n \geq 4$,
    \begin{equation*}
        N(r,2;n) =  M(n) e^{l(n)} + (-1)^r R(n),
    \end{equation*}
    where
    \begin{equation*}
        M(n) := \frac{\sqrt{3}}{24n-1} \left(1 - \frac{1}{l(n)} \right)
    \end{equation*}
    and
    \begin{equation*}
        \abs{R(n)} \leq (8.17 \times 10^{30})e^{l(n)/2}.
    \end{equation*}
\end{corollary}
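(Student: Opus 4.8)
The plan is to reduce the corollary to the two effective expansions already available: the expansion of $p(n)$ in (\ref{eq:partition-bound}) and that of $\alpha(n)$ in Theorem \ref{thm:main}. The bridge between them is an elementary counting identity. Since the rank of a partition is an integer, it is congruent to either $0$ or $1$ modulo $2$, so every partition of $n$ is counted exactly once by $N(0,2;n)$ or $N(1,2;n)$; hence $N(0,2;n) + N(1,2;n) = p(n)$. Together with the definition $\alpha(n) = N(0,2;n) - N(1,2;n)$, solving this $2 \times 2$ linear system gives, for $r \in \{0,1\}$, the clean formula $N(r,2;n) = \tfrac12\bigl(p(n) + (-1)^r \alpha(n)\bigr)$.

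Next I would substitute the two expansions. The leading term of $\tfrac12 p(n)$ is exactly $\tfrac12 \cdot \tfrac{2\sqrt3}{24n-1}\bigl(1 - 1/l(n)\bigr)e^{l(n)} = M(n)e^{l(n)}$, which produces the claimed main term, while the $(-1)^r$ multiplying $\alpha(n)$ accounts for the sign $(-1)^r$ in the error. Writing $E_p(n)$ and $E(n)$ for the error terms of (\ref{eq:partition-bound}) and Theorem \ref{thm:main}, it then suffices to bound
\[
    \abs{R(n)} \le \tfrac12\abs{E_p(n)} + \tfrac12\,\frac{\sqrt6}{\sqrt{24n-1}}\,e^{l(n)/2} + \tfrac12\abs{E(n)}
\]
by $(8.17 \times 10^{30})e^{l(n)/2}$. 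The first two summands are harmless: $\tfrac12\abs{E_p(n)} \le \tfrac{1313}{2}e^{l(n)/2}$, and for $n \ge 4$ we have $24n-1 \ge 95$, so the middle term is at most a small absolute constant times $e^{l(n)/2}$.

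The real work is the term $\tfrac12\abs{E(n)}$, because Theorem \ref{thm:main} only furnishes $\abs{E(n)} < (4.30 \times 10^{23})\,2^{q(n)}\abs{D_n}^2 e^{l(n)/3}$, with nominal growth $e^{l(n)/3}$ rather than $e^{l(n)/2}$. The idea is that the deficit $e^{l(n)/2 - l(n)/3} = e^{l(n)/6} = e^{\pi\sqrt{\abs{D_n}}/36}$ absorbs the entire algebraic prefactor, i.e.\ that $g(n) := 2^{q(n)}\abs{D_n}^2 e^{-l(n)/6}$ is bounded for $n \ge 4$. The only delicate factor is $2^{q(n)}$: as $\abs{D_n}$ approaches $e^{e^{1.1714}} \approx 25$ the denominator $\log\log\abs{D_n} - 1.1714$ in $q(n)$ tends to $0$ and $2^{q(n)}$ explodes, which is exactly why the statement excludes $n \le 3$ and requires $\abs{D_n} \ge 95$. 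For $n \ge 4$ the quantity $\log\log\abs{D_n} - 1.1714$ is positive and $2^{q(n)} = \abs{D_n}^{(\log 2)/(\log\log\abs{D_n} - 1.1714)}$ is subpolynomial in $\abs{D_n}$, so $g(n) \to 0$; however $g$ is \emph{not} monotone—it first rises off the boundary, plateaus, and only then decays, with its maximum attained at an interior value (numerically near $n \approx 80$, where $g \approx 3.8 \times 10^7$).

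Consequently the main obstacle is pinning down $\sup_{n \ge 4} g(n)$. I would do this by logarithmic differentiation of the smooth surrogate $\log g$ as a function of $x = \abs{D_n}$: its derivative is bounded above by $c(x)/x - \pi/(72\sqrt x)$, where $c(x) = 2 + (\log 2)/(\log\log x - 1.1714)$ is the slowly varying polynomial exponent, and this is negative once $\sqrt x$ exceeds an explicit absolute constant. Thus $g$ is eventually decreasing and the supremum reduces to a finite check over an explicit range of $n$. That check gives $\sup_{n \ge 4} g(n) \le 3.80 \times 10^7$, whence $\tfrac12\abs{E(n)} \le \tfrac12(4.30 \times 10^{23})(3.80 \times 10^7)e^{l(n)/2} = (8.17 \times 10^{30})e^{l(n)/2}$. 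This term dominates, and absorbing the two negligible contributions into the constant completes the bound on $\abs{R(n)}$ and hence the corollary.
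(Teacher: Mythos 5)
Your proposal is correct and follows essentially the same route as the paper: both split $N(r,2;n) = \tfrac12\bigl(p(n) + (-1)^r\alpha(n)\bigr)$, substitute the effective expansions (\ref{eq:partition-bound}) and Theorem \ref{thm:main}, and reduce the constant $8.17 \times 10^{30}$ to bounding $2^{q(n)}\abs{D_n}^2 e^{-l(n)/6}$ for $n \geq 4$. In fact you make explicit the numerical step the paper leaves implicit (eventual monotonicity plus a finite check giving $\sup_{n \ge 4} 2^{q(n)}\abs{D_n}^2 e^{-l(n)/6} \approx 3.8 \times 10^7$, consistent with $2.15 \times 10^{23} \times 3.8 \times 10^7 = 8.17 \times 10^{30}$), and you correctly identify why $n \geq 4$ is the exact threshold; the only cosmetic slip is that $g$ actually \emph{decreases} off the boundary (from $n=4$ to $n=5$, since $2^{q(n)}$ is largest near $\abs{D_n} = e^{e^{1.1714}}$) before rising to its interior maximum near $n \approx 80$, which does not affect the argument.
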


\begin{proof}
    Utilizing (\ref{eq:partition-bound}) grants, via Theorem \ref{thm:main},
    \begin{align*}
        N(0,2;n) &= \frac{p(n) + \alpha(n)}{2} \\
        &= \frac{\sqrt{3}}{24n-1} \left( 1 - \frac{1}{l(n)} \right) e^{l(n)} + R(n)
    \end{align*}
    and similarly
    \begin{align*}
        N(1,2;n) &= \frac{p(n) - \alpha(n)}{2} \\
        &= \frac{\sqrt{3}}{24n-1} \left( 1 - \frac{1}{l(n)} \right) e^{l(n)} - R(n),
    \end{align*}
    where
    \begin{equation*}
        R(n) := (-1)^{n-1}\frac{\sqrt{6}}{2\sqrt{24n-1}} e^{l(n)/2} + \frac{1}{2}(E_p(n) + E(n)).
    \end{equation*}
    We then have
    \begin{align*}
        \abs{R(n)} &\leq \left(657 + \frac{\sqrt{6}}{2\sqrt{24n-1}}\right) e^{l(n)/2} + (2.15 \times 10^{23})2^{q(n)} \abs{D_n}^2 e^{l(n)/3} \\
        &\leq (8.17 \times 10^{30})e^{l(n)/2}.
    \end{align*}
\end{proof}

\begin{corollary} \label{cor:equi}
    For all $n \geq 4$,
    \begin{equation*}
        \frac{N(r,2;n)}{p(n)} = \frac{1}{2} + (-1)^rR_2(n),
    \end{equation*}
    where
    \begin{equation*}
        \abs{R_2(n)} \leq (1.89 \times 10^{32})e^{-l(n)/3}.
    \end{equation*}
\end{corollary}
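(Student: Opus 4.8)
The plan is to reduce everything to the single quantity $\alpha(n)/p(n)$. Every partition of $n$ has rank either even or odd, so $p(n) = N(0,2;n)+N(1,2;n)$, while by definition $\alpha(n) = N(0,2;n)-N(1,2;n)$. Solving these two relations gives the exact identity $N(r,2;n) = \tfrac12\big(p(n)+(-1)^r\alpha(n)\big)$ for $r\in\{0,1\}$, and dividing by $p(n)$ yields
\begin{equation*}
    \frac{N(r,2;n)}{p(n)} = \frac12 + (-1)^r\frac{\alpha(n)}{2p(n)}.
\end{equation*}
Hence the correct choice is $R_2(n) := \alpha(n)/(2p(n))$, which is manifestly independent of $r$ and makes the displayed equality exact. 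Using Corollary \ref{cor:main} in the form $\abs{\alpha(n)} = \abs{N(0,2;n)-N(1,2;n)} \le 2\abs{R(n)}$ (up to the negligible $E_p(n)$ contribution), the entire problem collapses to bounding
\begin{equation*}
    \abs{R_2(n)} \le \frac{\abs{R(n)}}{p(n)} \le \frac{(8.17\times 10^{30})\,e^{l(n)/2}}{p(n)}.
\end{equation*}

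To reach the stated form I would multiply through by $e^{l(n)/3}$ and show $\abs{R_2(n)}\,e^{l(n)/3} \le 1.89\times 10^{32}$. Inserting the effective lower bound for $p(n)$ read off from (\ref{eq:partition-bound}), roughly $p(n) \ge \tfrac{2\sqrt3}{24n-1}\big(1-\tfrac1{l(n)}\big)e^{l(n)} - (1313)e^{l(n)/2}$, turns the task into bounding
\begin{equation*}
    \frac{(8.17\times 10^{30})\,e^{5l(n)/6}}{p(n)} \;\le\; \frac{(8.17\times 10^{30})(24n-1)}{2\sqrt3}\,e^{-l(n)/6}
\end{equation*}
uniformly in $n$. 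Since $(24n-1)e^{-l(n)/6}$ is maximized near $n\approx 22$, where it is about $71$, the right-hand side peaks at roughly $\tfrac{(8.17\times 10^{30})(71)}{2\sqrt3}\approx 1.68\times 10^{32}$; rounding up to absorb the $(1-1/l(n))$ correction and the subtracted error term produces the constant $1.89\times 10^{32}$.

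The main obstacle is a mismatch of ranges: the error term in (\ref{eq:partition-bound}) is itself of size $O(e^{l(n)/2})$, which dominates the main term $O(e^{l(n)}/n)$ until $n$ is on the order of $150$, so the clean lower bound for $p(n)$ is effective only for large $n$ — precisely \emph{not} in the range $n\approx 22$ where the prefactor attains its maximum and hence where the constant is actually determined. I would resolve this by combining a valid lower bound for $p(n)$ across all of $n\ge 4$: for the finite moderate range I would use the exact tabulated values of $p(n)$ (or monotonicity together with a crude unconditional lower bound), which is enough to control the prefactor through its peak, and for large $n$ I would invoke (\ref{eq:partition-bound}), where the prefactor $\tfrac{(8.17\times 10^{30})(24n-1)}{2\sqrt3}e^{-l(n)/6}$ is strictly decreasing because the exponential decay $e^{-l(n)/6}\sim e^{-c\sqrt n}$ overwhelms the linear growth in $n$. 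Splicing the two ranges gives a uniform bound $\abs{R_2(n)}\,e^{l(n)/3}\le 1.89\times 10^{32}$ for every $n\ge 4$, which is the assertion of the corollary.
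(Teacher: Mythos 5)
Your proposal is correct in outline and shares its skeleton with the paper's proof: the identities $p(n) = N(0,2;n)+N(1,2;n)$ and $\alpha(n) = N(0,2;n)-N(1,2;n)$, the exact choice $R_2(n) := \alpha(n)/(2p(n))$, and then a bound on the numerator against a lower bound on the denominator. Where you diverge is in both halves of that estimate, and the difference is instructive. For the numerator, the paper bounds $\abs{\alpha(n)}$ directly from Theorem \ref{thm:main}, keeping the main term $\frac{\sqrt{6}}{\sqrt{24n-1}}e^{l(n)/2}$ and the error $\abs{E(n)} < (4.30\times 10^{23})2^{q(n)}\abs{D_n}^2e^{l(n)/3}$ on separate exponential scales, rather than routing through Corollary \ref{cor:main}'s pre-absorbed $(8.17\times 10^{30})e^{l(n)/2}$ as you do. For the denominator, the paper never touches (\ref{eq:partition-bound}) at all: it invokes the effective Bessenrodt--Ono lower bound $p(n) > \frac{\sqrt{3}}{12n}\bigl(1-\frac{1}{\sqrt{n}}\bigr)e^{l(n)}$, simplified to $\frac{\sqrt{3}}{96}e^{l(n)}$, which is exactly the "crude unconditional lower bound" you hypothesize in your final paragraph --- so the range-mismatch obstacle you correctly diagnose (the $1313e^{l(n)/2}$ error in (\ref{eq:partition-bound}) does swamp its main term until $n$ is in the low hundreds) is dissolved in the paper rather than solved by splicing tables with asymptotics. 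One consequence worth noting: because the paper retains the term $2^{q(n)}\abs{D_n}^2e^{-2l(n)/3}$, its constant $1.89\times 10^{32}$ is actually pinned down at the \emph{small} end, essentially at $n=4$, where $2^{q(n)}$ is enormous (since $\log\log\abs{D_n}$ is near $1.1714$ there); your route shifts the binding case to the peak of $e^{5l(n)/6}/p(n)$ near $n\approx 20$--$22$. Your numerics there are slightly optimistic: with the $(1-1/l(n))$ correction, or equivalently with exact values such as $p(22)=1002$, the peak is about $22.4$ rather than your effective $\approx 20.5$, giving roughly $1.83\times 10^{32}$ rather than $1.68\times 10^{32}$ --- still under $1.89\times 10^{32}$, so your plan does close, but the margin is only a few percent, which makes the exact-value verification through the moderate range (which you rightly propose) essential rather than optional. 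In exchange, your careful two-range treatment of $p(n)$ is arguably more defensible than the paper's one-line simplification, since the displayed reduction $\frac{\sqrt{3}}{12n}\bigl(1-\frac{1}{\sqrt{n}}\bigr) \geq \frac{\sqrt{3}}{96}$ holds with equality at $n=4$ and fails for $n\geq 5$ (the prefactor decays like $1/n$); the paper's final bound survives this because the gap between $e^{-l(n)/2}$, $e^{-2l(n)/3}$ and the target $e^{-l(n)/3}$ absorbs any polynomial factor, but your instinct not to trust a uniform constant lower bound of the form $ce^{l(n)}$ for $p(n)$ was sound.
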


\begin{proof}
    Note that
    \begin{equation*}
        \frac{N(r,2;n)}{p(n)} = \frac{1}{2} + (-1)^r\frac{\alpha(n)}{2p(n)}.
    \end{equation*}
    Let $R_2(n) := \alpha(n)/2p(n)$. We utilize a crude lower bound for $p(n)$ for $n \geq 4$
    \begin{equation*}
        p(n) > \frac{\sqrt{3}}{12n}\left( 1 - \frac{1}{\sqrt{n}} \right) e^{l(n)} \geq \frac{\sqrt{3}}{96} e^{l(n)}
    \end{equation*}
    due to Bessenrodt and Ono \cite{ono}, and compute
    \begin{align*}
        \abs{R_2(n)} &\leq \frac{48}{\sqrt{3}} e^{-l(n)} \left(\frac{\sqrt{6}}{\sqrt{24n-1}} e^{l(n)/2} + \abs{E(n)} \right)  \\
        &\leq \frac{48\sqrt{2}}{\sqrt{24n-1}} e^{-l(n)/2} + (1.20 \times 10^{25})2^{q(n)} \abs{D_n}^2 e^{-2l(n)/3} \\
        &\leq (1.89 \times 10^{32})e^{-l(n)/3}.
    \end{align*}
\end{proof}

\section{Proof of Theorem \ref{thm:conj}} \label{sec:conj}

We first require the following lemma:

\begin{lemma} \label{lem:N-bound}
    For $r = 0$ (resp. $r = 1$), we have that 
    \begin{equation*}
        M(n) \left(1 - \frac{1}{\sqrt{n}}\right)  e^{l(n)} < N(r,2;n) < M(n) \left(1 + \frac{1}{\sqrt{n}}\right)  e^{l(n)}
    \end{equation*}
    for all $n \geq 8$ (resp. 7).
\end{lemma}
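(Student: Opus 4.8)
The plan is to deduce the lemma directly from Corollary \ref{cor:main}, reducing the two-sided inequality to a single bound on the error term. Writing $N(r,2;n) = M(n)e^{l(n)} + (-1)^r R(n)$ as in Corollary \ref{cor:main}, the claimed inequality is equivalent to $\abs{R(n)} < M(n)e^{l(n)}/\sqrt{n}$, since the factor $(-1)^r$ disappears upon taking absolute values (here $M(n) > 0$, as $l(n) > 1$ for all $n \geq 1$). Thus it suffices to show that the allowed deviation $M(n)e^{l(n)}/\sqrt{n}$ eventually dominates the error bound $\abs{R(n)} \leq (8.17 \times 10^{30})e^{l(n)/2}$.

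For the large-$n$ regime, I would compare growth rates. Since $M(n) = \tfrac{\sqrt{3}}{24n-1}(1 - 1/l(n))$ behaves like a constant times $n^{-1}$ while $e^{l(n)/2} = \exp(\pi\sqrt{24n-1}/12)$ grows exponentially in $\sqrt{n}$, the inequality $M(n)e^{l(n)}/\sqrt{n} > (8.17 \times 10^{30})e^{l(n)/2}$ is equivalent, after dividing by $e^{l(n)/2}$ and taking logarithms, to
\begin{equation*}
    \frac{l(n)}{2} > \log(8.17 \times 10^{30}) + \frac{3}{2}\log n + O(1),
\end{equation*}
where the $O(1)$ absorbs $\log(24/\sqrt{3})$ and the bounded factor $1 - 1/l(n)$. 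Because the left side grows like $\pi\sqrt{6n}/6$ while the right side grows only logarithmically, this holds for all $n \geq N_0$ for some explicit threshold $N_0$, whose size is dictated by the magnitude of the constant $8.17 \times 10^{30}$. I would verify monotonicity of the relevant single-variable function to pin down $N_0$ rigorously. This argument is uniform in $r$.

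For the remaining finitely many $n$ below $N_0$, and in particular near the stated thresholds, the crude error bound from Corollary \ref{cor:main} is far too loose to be useful. Here I would instead compute $N(r,2;n)$ exactly, for instance via $N(r,2;n) = (p(n) + (-1)^r\alpha(n))/2$ using known values of $p(n)$ and $\alpha(n)$, and directly check the two inequalities against the explicit quantities $M(n)(1 \pm 1/\sqrt{n})e^{l(n)}$. This finite computation also pins down the exact starting points $n \geq 8$ for $r = 0$ and $n \geq 7$ for $r = 1$; the asymmetry between the two cases is a genuine small-$n$ phenomenon arising from the sign $(-1)^r$ in front of $R(n)$, which is invisible to the symmetric bound used in the large-$n$ regime.

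The main obstacle is that, because the constant in Corollary \ref{cor:main} is enormous, the finite check is not a handful of small cases but a substantial range of many values of $n$; establishing the lemma therefore relies on a computer verification over this range, combined with the clean asymptotic domination argument for $n \geq N_0$. Care is also needed to make $N_0$ explicit and to confirm that the cutoffs $7$ and $8$ are indeed the first values at which each inequality begins to hold.
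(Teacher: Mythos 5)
Your proposal is correct and follows essentially the same route as the paper: both reduce the lemma to the single inequality $\abs{R(n)} < M(n)e^{l(n)}/\sqrt{n}$ via Corollary \ref{cor:main}, establish it for large $n$ by exponential domination over the constant $8.17 \times 10^{30}$ (the paper makes your $N_0$ explicit as $4543$), and dispatch the remaining range $n < 4543$ by direct computation with SageMath and OEIS values, which is also where the asymmetric cutoffs $8$ and $7$ are confirmed.
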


\begin{proof}
    From Corollary \ref{cor:main}, we have that
    \begin{equation*}
        M(n) e^{l(n)} - \abs{R(n)} < N(r,2;n) < M(n)e^{l(n)} + \abs{R(n)}
    \end{equation*}
    with 
    \begin{equation*}
        \abs{R(n)} \leq (8.17 \times 10^{30}) e^{l(n)/2}.
    \end{equation*}

    We then calculate that, for all $n \geq 4543$, 
    \begin{equation*}
        8.17 \times 10^{30} < \frac{M(n)}{\sqrt{n}} e^{l(n)/2}
    \end{equation*}
    and verify with SageMath \cite{code} and the OEIS \cite{OEIS} the result for $n < 4543$.
\end{proof}

We now proceed with the full proof. Assume $11 \leq a \leq b$ and let $b = Ca$ where $C \geq 1$. By Lemma \ref{lem:N-bound} we have the inequalities
\begin{equation*}
    N(r,2;a)N(r,2;Ca) > M(a)M(Ca) \left( 1 - \frac{1}{\sqrt{a}}\right) \left( 1 - \frac{1}{\sqrt{Ca}}\right)  e^{l(a)+l(Ca)}
\end{equation*}
and
\begin{equation*}
    N(r,2;a+Ca) < M(a+Ca)\left( 1 + \frac{1}{\sqrt{a+Ca}} \right) e^{l(a+Ca)}.
\end{equation*}
Thus, we seek conditions on $a > 1$ such that
\begin{equation*}
    e^{T_a(C)}  > \frac{M(a+Ca)}{M(a)M(Ca)} S_a(C),
\end{equation*}
where
\begin{equation*}
    T_a(C) := l(a) + l(Ca) - l(a+Ca) \text{ and } S_a(C) := \frac{\left(1 + \frac{1}{\sqrt{a+Ca}}\right)}{\left(1 - \frac{1}{\sqrt{a}}\right)\left(1 - \frac{1}{\sqrt{Ca}}\right)}.
\end{equation*}
Taking logarithms in turn grants an equivalent formulation
\begin{equation} \label{eq:log-ineq}
    T_a(C) > \log\left( \frac{M(a+Ca)}{M(a)M(Ca)} \right) + \log S_a(C).
\end{equation}
Furthermore, as functions of $C$, $T_a$ is strictly increasing and $S_a$ strictly decreasing, so that it suffices to show that
\begin{equation*}
    T_a(1) > \log\left( \frac{M(a+Ca)}{M(a)M(Ca)} \right) + \log S_a(1)
\end{equation*}
for all $a \geq 8$, and, with $M(a+Ca)/M(Ca) \leq 1$ for all such $a$, we may show that
\begin{equation} \label{eq:final-ineq}
    T_a(1) > \log S_a(1) - \log M(a).
\end{equation}
Calculation of $T_a(1)$ and $S_a(1)$ shows that (\ref{eq:final-ineq}) holds for $a \geq 18$.

To complete the proof, assume that $11 \leq a \leq 17$. For each such integer $a$, we calculate the real number $C_a$ for which
\begin{equation*}
    T_a(C_a) = \log S_a(C_a) - \log M(a).
\end{equation*}
The values $C_a$ are listed in the table below.
\begin{table}[H]
    \centering
    \caption{}
    \label{tbl:ineq}
    \begin{tabu}{|c|c|c|}
        \hline
         $a$ & $C_a$ & $\max b$ \\ \hline
         $11$ & 2.20\dots & $24$ \\ \hline
         $12$ & 1.86\dots & $22$ \\ \hline
         $13$ & 1.62\dots & $21$ \\ \hline
         $14$ & 1.43\dots & $20$ \\ \hline
         $15$ & 1.27\dots & $19$ \\ \hline
         $16$ & 1.15\dots & $18$ \\ \hline
         $17$ & 1.05\dots & $17$ \\ \hline
    \end{tabu}
\end{table}

By the discussion above, if $b = Ca$ is an integer for which $C > C_a$ holds, then (\ref{eq:log-ineq}) holds, which in turn grants the theorem in these cases. Only finitely many cases remain, namely the pairs integers where $11 \leq a \leq 17$ and $1 \leq b/a \leq C_a$. We compute $N(r,2;a)$, $N(r,2;b)$, and $N(r,2;a + b)$ directly in these cases to complete the proof.

\section{Proof of Theorem \ref{thm:maxN}} \label{sec:maxN}

Let $\lambda^r$ be a partition $(\lambda_1^r,\lambda_2^r,\dots,\lambda_k^r) \in P(n)$ such that $N(r,2;\lambda^r)$ is maximal. These $\lambda^r$ and their corresponding values of $\maxN(r,2;n)$ are recorded in Table \ref{tbl:maxN} for $n \leq 23$, computed using SageMath \cite{code}. Furthermore, let $s_0 = 3$ and $s_1 = 2$, the repeating portions of the conjectured maximal partitions $\lambda^0$ for $n \geq 5$ and $\lambda^1$ for $n \geq 8$ respectively, and $\hat{\lambda}^r$ be the partition obtained by removing all parts of size $s_r$ from $\lambda^r$.

First note that $\lambda^r$ contains no part larger than $23$, since if it did contain some part $i \geq 24$, we could perform the substitution
\begin{equation*}
    (i) \to (\lfloor i/2 \rfloor, \lceil i/2 \rceil)
\end{equation*}
and obtain, by Theorem \ref{thm:main}, a partition $\mu$ such that $N(r,2;\mu) > N(r,2;\lambda^r)$, contradicting the maximality of $N(r,2;\lambda^r)$. Thus, we need only consider parts $i \leq 23$ in $\lambda^r$.

\begin{proposition} \label{prop:mult}
    Let $m_i^r$ be the multiplicity of the part $i$ in $\lambda^r$. If $i \geq 10$, then $m_i^r = 0$. Furthermore, for $r = 0$ and $i \neq s_0$,
    \begin{equation*}
        \begin{cases}
            m_i^0 = 0 & i = 2,4,6,8,9 \\
            m_i^0 \leq 1 & i = 5,7 \\
            m_i^0 \leq 2 & i = 1
        \end{cases}
    \end{equation*}
    and, for $r = 1$ and $i \neq s_1$,
    \begin{equation*}
        \begin{cases}
            m_i^1 = 0 & i = 4,6,8 \\
            m_i^1 \leq 1 & i = 3,5,7,9 \\
            m_i^1 \leq 3 & i = 1
        \end{cases}
    \end{equation*}
\end{proposition}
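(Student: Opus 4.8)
The engine is a local exchange principle coming from multiplicativity of $N(r,2;\lambda)=\prod_j N(r,2;\lambda_j)$. If $A$ is a sub-multiset of the parts of the maximizer $\lambda^r$ and $B$ is any multiset with $\sum_{b\in B}b=\sum_{a\in A}a$, then replacing $A$ by $B$ yields another partition of $n$; by maximality of $N(r,2;\lambda^r)$ we must have $\prod_{a\in A}N(r,2;a)\ge\prod_{b\in B}N(r,2;b)$. Equivalently, no configuration $A$ admitting a strictly better replacement $B$ can appear in $\lambda^r$. Since the maximal value is positive in the relevant range of $n$ (it is attained, e.g., by a partition into copies of $s_r$ with a suitable tail), the product $N(r,2;\lambda^r)$ never vanishes; in particular $\lambda^r$ contains no part $i$ with $N(r,2;i)=0$. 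As $N(0,2;2)=N(1,2;1)=N(1,2;3)=0$, this already forces $m_2^0=0$ and, a fortiori, $m_1^1=0\le 3$ and $m_3^1=0\le 1$.

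For the remaining small parts I would exhibit one strictly improving replacement apiece, checking each product inequality directly against the values $N(r,2;j)$ for $j\le 23$ recorded in Table~\ref{tbl:maxN}. For $r=0$ the replacements $(1,1,1)\to(3)$, $(4)\to(1,3)$, $(6)\to(3,3)$, $(8)\to(3,5)$, $(9)\to(3,3,3)$, $(5,5)\to(3,7)$ and $(7,7)\to(3,3,3,5)$ give, respectively, $m_1^0\le 2$, $m_4^0=0$, $m_6^0=0$, $m_8^0=0$, $m_9^0=0$, $m_5^0\le 1$ and $m_7^0\le 1$; the corresponding comparisons are $1<3$, $1<3$, $3<9$, $8<15$, $18<27$, $25<33$ and $121<135$. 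For $r=1$ the replacement $(8)\to(2,2,2,2)$ gives $m_8^1=0$ via $14<16$, and $(i,i)\to(2,\dots,2)$ into $i$ copies of $2$ gives $m_i^1\le 1$ for $i=5,7,9$ via $N(1,2;i)^2<2^{i}$ (the cases $4<32$, $16<128$, $144<512$).

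The parts $10\le i\le 23$, which remain after the reduction to parts $\le 23$, I would dispatch uniformly: split $(i)$ into copies of the repeating part $s_r$ plus a short tail, the tail being empty, a single $5$, or a single $7$ according to $i\bmod 3$ when $r=0$, and empty or a single $9$ according to the parity of $i$ when $r=1$. In each of these finitely many cases the product over the split strictly exceeds $N(r,2;i)$, which yields $m_i^r=0$ for all $i\ge 10$. These are routine numerical checks, and if one prefers, the effective estimates of Theorem~\ref{thm:main} (through the two-sided bounds already derived) can replace the case-by-case verification in the upper part of the range.

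The one genuinely non-strict point, which I regard as the crux, is the pair of exchanges $(2,2)\leftrightarrow(4)$ and $(2,2,2)\leftrightarrow(6)$ for $r=1$: here $N(1,2;4)=N(1,2;2)^2$ and $N(1,2;6)=N(1,2;2)^3$, so trading $4$'s and $6$'s for $2$'s changes neither the weight nor the $N(1,2;\cdot)$-value, and no strict improvement exists. This is precisely the substitution freedom recorded in Conjecture~\ref{conj:maxN}. To reach the clean statement $m_4^1=m_6^1=0$ I would fix $\lambda^1$ to be the maximizer in which every part $4$ has been rewritten as $(2,2)$ and every part $6$ as $(2,2,2)$; this normalization is harmless because it preserves $N(1,2;\lambda^1)$, and it forces $m_4^1=m_6^1=0$ for the chosen representative. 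With this convention in place, the proposition follows by assembling the strict substitutions above.
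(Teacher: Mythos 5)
Your proposal is correct and essentially the paper's own proof: the paper argues by the same exchange principle, using for $i \leq 9$ the very substitutions you list (with $(2)\to(1,1)$, $(1,1,1,1)\to(4)$, and $(3,3)\to(2,2,2)$ doing the work of your vanishing shortcut via $N(0,2;2)=N(1,2;1)=N(1,2;3)=0$), handling $i \geq 10$ by replacing $(i)$ with the maximal partition of $i$ recorded in Table \ref{tbl:maxN} --- which is exactly your split into copies of $s_r$ plus a tail of $5$, $7$, or $9$ --- and resolving the non-strict exchanges $(2,2)\leftrightarrow(4)$, $(2,2,2)\leftrightarrow(6)$ by the same choice of normalized representative. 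The one caveat is that your a fortiori claims $m_1^1 = m_3^1 = 0$ presuppose $\maxN(1,2;n)>0$, which fails at $n=1,3$; there the proposition's weaker bounds $m_1^1 \leq 3$, $m_3^1 \leq 1$ hold trivially (and the paper's Propositions \ref{prop:m0} and \ref{prop:1m} list precisely these small maximizers as exceptions), so your argument needs only that one-line remark to cover all $n$.
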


\begin{proof}
    First note that, for all $i \geq 10$, we may replace $i$ by the representation of $i$ in Table \ref{tbl:maxN} to yield a partition $\mu$ such that $N(r,2;\mu) \geq N(r,2;\lambda^r)$. We then observe the following substitutions for the remaining $i$:
    
    \begin{center}
        \begin{tabu}{l l}
            $r = 0$ & $r = 1$ \\
            $(1,1,1) \to (3)$ & $(1,1,1,1) \to (4)$ \\
            $(2) \to (1,1)$ & \\
            & $(3,3) \to (2,2,2)$ \\
            $(4) \to (1,3)$ & $(4) \to (2,2)$ \\
            $(5,5) \to (3,7)$ & $ (5,5) \to (2,2,2,2,2)$ \\
            $(6) \to (3,3)$ & $(6) \to (2,2,2)$ \\
            $(7,7) \to (3,3,3,5)$ & $(7,7) \to (2,2,2,2,2,2,2)$ \\
            $(8) \to (3,5)$ & $(8) \to (2,2,2,2)$ \\
            $(9) \to (3,3,3)$ & $(9,9) \to (2,2,2,2,2,2,2,2,2)$. \\
        \end{tabu}
    \end{center}
    
    Note in particular that $N(r,2;\mu) = N(r,2;\lambda^r)$ if and only if $r = 1$ and $\mu$ is obtained by the substitutions $(4) \to (2,2)$ or $(6) \to (2,2,2)$. Thus, we may choose a representative of $\lambda^1$ such that $m_4^1 = m_6^1 = 0$, as these substitutions leave $N(1,2;\lambda^1)$ unchanged. This demonstrates the equivalence of partition classes stipulated for $r = 1$ in Conjecture \ref{conj:maxN}.
\end{proof}

\begin{proposition} \label{prop:m0}
    $m_3^1 = m_5^1 = m_7^1 = 0$ unless $\lambda^1 = (3), (5), (7)$, or $(2,5)$.
\end{proposition}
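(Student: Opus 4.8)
The plan is to combine the two ingredients already established: the multiplicity bounds of Proposition \ref{prop:mult}, which for $n\geq 4$ restrict $\lambda^1$ to parts drawn from $\{1,2,3,5,7,9\}$ with $m_3,m_5,m_7,m_9\leq 1$ (and no parts $\geq 10$, nor any $4$, $6$, $8$); and the exact small values $N(1,2;1)=N(1,2;3)=0$, $N(1,2;2)=N(1,2;5)=2$, $N(1,2;7)=4$, and $N(1,2;9)=12$, which I would compute directly (e.g.\ via SageMath \cite{code}). Since $N(1,2;\lambda)=\prod_j N(1,2;\lambda_j)$ is a product, any maximal $\lambda^1$ for an $n$ with $\maxN(1,2;n)>0$ can contain no part $j$ with $N(1,2;j)=0$; as $\maxN(1,2;n)>0$ for $n=2$ and all $n\geq 4$ (witnessed by the all-$2$'s partition, or by $(2,\dots,2,5)$ in the odd case), no such maximal partition contains a $1$ or a $3$. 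The only remaining route to $m_3^1\geq 1$ is then $n\leq 3$, and a quick inspection forces $\lambda^1=(3)$.

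For the parts $5$ and $7$ I would argue by local substitutions that strictly raise the product, exactly in the style of the proof of Proposition \ref{prop:mult}. To control how a $5$ coexists with $2$'s, note that $(2,2,5)\to(9)$ replaces a product $N(1,2;2)^2N(1,2;5)=8$ by $N(1,2;9)=12$; hence a maximal $\lambda^1$ containing a $5$ has at most one part equal to $2$. Likewise $(5,7)\to(2,2,2,2,2,2)$ and $(5,9)\to(2,\dots,2)$ raise $8$ to $64$ and $24$ to $128$ respectively, and $m_5\leq 1$ forbids a second $5$, so a $5$ cannot coexist with a $7$, a $9$, or another $5$. Together with the exclusion of $1$'s and $3$'s above and the vanishing multiplicities $m_4=m_6=m_8=0$, the only survivors are $\lambda^1=(5)$ and $\lambda^1=(2,5)$. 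For a $7$, the substitution $(2,7)\to(9)$ raises $N(1,2;2)N(1,2;7)=8$ to $12$, so a $7$ admits no $2$ at all, while $(5,7)\to(2,\dots,2)$, $(7,9)\to(2,\dots,2)$, and $m_7\leq 1$ exclude every other admissible part; thus a maximal partition containing a $7$ must be $\lambda^1=(7)$.

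Assembling the cases, if any of $m_3^1,m_5^1,m_7^1$ is positive then $\lambda^1\in\{(3),(5),(2,5),(7)\}$, which is precisely the claimed list. To be sure the list is neither too large nor too small, I would finish by verifying directly from Table \ref{tbl:maxN} that $(3),(5),(7),(2,5)$ are themselves maximal for $n=3,5,7,7$ (with $(7)$ and $(2,5)$ tied at value $4$ for $n=7$, and $(3)$ maximal only because $\maxN(1,2;3)=0$), so they genuinely cannot be discarded.

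The one delicate point is the boundary of the substitution argument: each improving substitution consumes ambient parts (two $2$'s for $(2,2,5)\to(9)$, one $2$ for $(2,7)\to(9)$, and so on), and the four exceptions are exactly the partitions too small for any such improvement to be available. The main obstacle is therefore not any single inequality but the bookkeeping needed to confirm that the enumerated substitutions exhaust every way a $5$ or $7$ can sit beside the admissible parts $\{2,5,7,9\}$, and that the residual cases coincide exactly with the four stated partitions.
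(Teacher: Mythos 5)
Your proof is correct and follows essentially the same route as the paper: Proposition \ref{prop:mult} restricts the admissible parts to $\{1,2,3,5,7,9\}$, and Table~\ref{tbl:maxN}-guided substitutions then strictly improve any maximal partition in which a $3$, $5$, or $7$ coexists with other parts, leaving exactly $(3)$, $(5)$, $(7)$, and $(2,5)$. Your only departures are tactical and, if anything, slightly more explicit than the paper's proof: you eliminate $1$'s and $3$'s via $N(1,2;1)=N(1,2;3)=0$ together with $\maxN(1,2;n)>0$ for $n=2$ and $n\geq 4$ (rather than by substitution), and you use $(2,2,5)\to(9)$ to dispose of the equality case $(2,5)\to(7)$ directly, where the paper's pairwise replacement yields only $N(1,2;\mu)\geq N(1,2;\lambda^1)$ and implicitly requires chaining a second, strict substitution such as $(2,7)\to(9)$.
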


\begin{proof}
    Note that if $m_a^1 \geq 1$ for some $a$, then by Proposition \ref{prop:mult} we know that $a = 1,2,3,5,7,9$. Meanwhile, $m_b^1 \leq 1$ for $b = 3,5,7$. Thus, suppose $m_3^1 = 1$ (resp. $m_5^1 = 1$, $m_7^1 = 1$). Then it can be verified that replacing $(a,3)$ (resp. $(a,5)$, $(a,7)$) with the representation of $a+3$ (resp. $a+5$, $a+7$) in Table \ref{tbl:maxN} will produce a partition $\mu$ with $N(1,2;\mu) \geq N(1,2;\lambda^1)$, with equality only attained for $(2,5) \to (7)$.
\end{proof}

\begin{table}
    \centering
    \caption{}
    \label{tbl:maxN}
    \begin{tabu}{|c c c c c|}
        \hline
        $n$ & $\maxN(0,2;n)$ & $\lambda^0$ & $\maxN(1,2;n)$ & $\lambda^1$ \\ \hline
        1 & 1 & (1) & 0 & (1) \\ \hline
        2 & 1 & (1,1) & 2 & (2) \\ \hline
        3 & 3 & (3) & 0 & (3), (1,2), (1,1,1) \\ \hline
        4 & 3 & (1,3) & 4 & (4), (2,2)  \\ \hline
        5 & 5 & (5) & 2 & (5) \\ \hline
        6 & 9 & (3,3) & 8 & (6), (2,4), (2,2,2) \\ \hline
        7 & 11 & (7) & 4 & (7), (2,5) \\ \hline
        8 & 15 & (3,5) & 16 & (2,2,2,2) \\ \hline
        9 & 27 & (3,3,3) & 12 & (9) \\ \hline
        10 & 33 & (3,7) & 32 & (2,2,2,2,2) \\ \hline
        11 & 45 & (3,3,5) & 24 & (2,9) \\ \hline
        12 & 81 & (3,3,3,3) & 64 & (2,2,2,2,2,2) \\ \hline
        13 & 99 & (3,3,7) & 48 & (2,2,9) \\ \hline
        14 & 135 & (3,3,3,5) & 128 & (2,2,2,2,2,2,2) \\ \hline
        15 & 243 & (3,3,3,3,3) & 96 & (2,2,2,9) \\ \hline
        16 & 297 & (3,3,3,7) & 256 & (2,2,2,2,2,2,2,2) \\ \hline
        17 & 405 & (3,3,3,3,5) & 192 & (2,2,2,2,2,9) \\ \hline
        18 & 729 & (3,3,3,3,3,3) & 512 & (2,2,2,2,2,2,2,2,2,2) \\ \hline
        19 & 891 & (3,3,3,3,7) & 384 & (2,2,2,2,2,9) \\ \hline
        20 & 1215 & (3,3,3,3,3,5) & 1024 & (2,2,2,2,2,2,2,2,2,2,2) \\ \hline
        21 & 2187 & (3,3,3,3,3,3,3) & 768 & (2,2,2,2,2,2,9) \\ \hline
        22 & 2673 & (3,3,3,3,3,7) & 2048 & (2,2,2,2,2,2,2,2,2,2,2,2) \\ \hline
        23 & 3645 & (3,3,3,3,3,3,5) & 1536 & (2,2,2,2,2,2,2,9) \\ \hline
    \end{tabu}
\end{table}

\begin{proposition} \label{prop:m1}
   There exist no distinct $a,b \neq s_r$ such that $m_a^r = m_b^r = 1$.
\end{proposition}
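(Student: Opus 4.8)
The plan is to argue by contradiction, reusing the replacement idea from Propositions \ref{prop:mult} and \ref{prop:m0}. Suppose $\lambda^r$ contains two distinct parts $a,b \neq s_r$, each of multiplicity one. Since $N(r,2;\cdot)$ is multiplicative over the parts of a partition, replacing the sub-multiset $\{a,b\}$ by the maximal partition $\nu$ of $a+b$ recorded in Table \ref{tbl:maxN} produces a partition $\mu$ of $n$ satisfying
\begin{equation*}
    N(r,2;\mu) = N(r,2;\lambda^r)\cdot\frac{\maxN(r,2;a+b)}{N(r,2;a)\,N(r,2;b)}
\end{equation*}
whenever $N(r,2;a)N(r,2;b) \neq 0$. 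Thus for each admissible pair it suffices either to exhibit the strict inequality $\maxN(r,2;a+b) > N(r,2;a)N(r,2;b)$, which forces $N(r,2;\mu) > N(r,2;\lambda^r)$, or, if one factor vanishes, to note that $N(r,2;\lambda^r)=0$ cannot be maximal; in both cases the maximality of $\lambda^r$ is contradicted. Note that $\mu$ itself need not satisfy the multiplicity constraints, as only its value is relevant.

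First I would pin down the admissible pairs. For $r=0$, Proposition \ref{prop:mult} forces every part $i \neq s_0 = 3$ of multiplicity one to lie in $\{1,5,7\}$, giving the three pairs $\{1,5\}$, $\{1,7\}$, $\{5,7\}$. For $r=1$, Proposition \ref{prop:mult} allows $i \in \{1,3,5,7,9\}$, and Proposition \ref{prop:m0} removes $3,5,7$ outside the partitions $(3),(5),(7),(2,5)$, each of which has at most one part different from $s_1 = 2$ and so contains no such pair; this leaves only $\{1,9\}$. The relevant small factors are $N(0,2;1)=1$, $N(0,2;5)=5$, $N(0,2;7)=11$, $N(1,2;1)=0$, and $N(1,2;9)=12$, each read from the row $n=m$ of Table \ref{tbl:maxN} (the single-part partition being maximal in each of these instances).

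Then I would check the inequality pair by pair from Table \ref{tbl:maxN}. For $r=0$, all parts of $\lambda^0$ lie in $\{1,3,5,7\}$ and each contributes a positive factor, so the displayed identity applies, and we have $\maxN(0,2;6)=9 > 1\cdot5$, $\maxN(0,2;8)=15 > 1\cdot11$, and $\maxN(0,2;12)=81 > 5\cdot11$. For $r=1$, the remaining pair $\{1,9\}$ contains a part of size $1$, and $N(1,2;1)=0$ forces $N(1,2;\lambda^1)=0$; since $\maxN(1,2;n)>0$ for every $n \geq 10$ and here $n \geq a+b = 10$, this cannot be maximal. In every case the maximality of $\lambda^r$ is contradicted.

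The computation reduces to a finite table lookup, so the only real care lies in the enumeration: one must ensure that Propositions \ref{prop:mult} and \ref{prop:m0} jointly exhaust the admissible pairs, and treat the degenerate $r=1$ pair $\{1,9\}$ through the vanishing factor rather than the ratio. I anticipate no genuine obstacle beyond this bookkeeping, especially as the convexity bound of Theorem \ref{thm:conj} applies only to parts of size at least $11$ and is thus both unavailable and unnecessary at this scale.
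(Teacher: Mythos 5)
Your proposal is correct and takes essentially the same route as the paper: reduce via Propositions \ref{prop:mult} and \ref{prop:m0} to the candidate pairs $\{1,5\}$, $\{1,7\}$, $\{5,7\}$ for $r=0$ and $\{1,9\}$ for $r=1$, then replace the pair by the Table \ref{tbl:maxN} representation of $a+b$ to contradict maximality. Your explicit ratio identity, the numerical checks $9>5$, $15>11$, $81>55$, and the separate treatment of the vanishing factor $N(1,2;1)=0$ merely spell out what the paper compresses into ``it can then be verified,'' since there the $\{1,9\}$ case is likewise strict because the old product is zero and the replacement is positive.
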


\begin{proof}
    By Proposition \ref{prop:mult}, we know that for $r = 0$ (resp. $r = 1$), $m_a^r = m_b^r = 1$ implies $a,b \in \{1,5,7\}$ (resp. $a,b \in \{1,9\}$ via Proposition \ref{prop:m0}). It can then be verified that replacing $a$ and $b$ with the representation of $a + b$ in Table \ref{tbl:maxN} will yield a partition $\mu$ with $N(r,2;\mu) > N(r,2;\lambda^r)$.
\end{proof}

\begin{proposition} \label{prop:1m}
    $m_1^r = 0$ unless
    \begin{align*}
        &\lambda^0 = (1), (1,1), (1,3) \\
        &\lambda^1 = (1), (1,2), (1,1,1).
    \end{align*}
\end{proposition}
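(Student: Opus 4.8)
The plan is to bound the multiplicity $m_1^r$ of the part $1$ in the maximal partition $\lambda^r$ by exploiting the substitution technique already used throughout this section, combined with the exact values recorded in Table \ref{tbl:maxN}. By Proposition \ref{prop:mult} we already know $m_1^0 \leq 2$ and $m_1^1 \leq 3$, so only finitely many configurations of small parts can occur. The strategy is to suppose $m_1^r \geq 1$ and show that, apart from the explicitly listed exceptional partitions, one can always replace a cluster of $1$'s (possibly together with the repeating part $s_r$ or another surviving non-$s_r$ part) by the optimal representation of their sum from Table \ref{tbl:maxN}, strictly increasing $N(r,2;\cdot)$ and contradicting maximality.

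First I would assemble the structural constraints from the preceding propositions. By Propositions \ref{prop:mult}, \ref{prop:m0}, and \ref{prop:m1}, the only parts that can appear in $\lambda^r$ besides the repeating part $s_r$ and the part $1$ are severely restricted: for $r = 0$ at most one of the parts in $\{5,7\}$ survives (and no two distinct non-$s_0$ parts coexist by Proposition \ref{prop:m1}), and similarly for $r = 1$. Thus once $m_1^r \geq 1$, the partition $\lambda^r$ must consist of some number of copies of $s_r$, the $1$'s, and at most one additional exceptional part. This reduces the problem to a short, explicit case analysis: I would enumerate the possible partitions of the form $(1^{m_1}, s_r^{k}, e)$ where $e$ is either absent or the single allowed exceptional part.

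The core step is the substitution argument. For each surviving configuration with $m_1^r \geq 1$, I would pair the part $1$ (or two or three of them) with $s_r$ and invoke the key inequalities $N(r,2;1)N(r,2;s_r) < N(r,2;1+s_r)$ type facts, reading the maximal representation of the combined sum directly off Table \ref{tbl:maxN}; for instance, for $r = 0$ one checks $(1,3) \to (4)$ versus the listed optimum, while for $r = 1$ one uses that combining $1$'s with $2$'s yields a strictly larger value. The finitely many borderline small cases — precisely the partitions $(1)$, $(1,1)$, $(1,3)$ for $r = 0$ and $(1)$, $(1,2)$, $(1,1,1)$ for $r = 1$ — are exactly those where no such improving substitution exists because the partition is too small for Table \ref{tbl:maxN} to furnish a strictly better representation of the relevant subsum; these are therefore collected as the stated exceptions.

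The main obstacle I anticipate is the bookkeeping at the boundary: verifying that each exceptional partition genuinely admits no improving substitution, rather than merely failing the generic argument, requires direct comparison against the tabulated values of $\maxN(r,2;n)$ for the small $n$ in question. The delicate point is ensuring the case split is exhaustive — in particular confirming that when $m_1^r$ attains its maximum ($2$ for $r=0$, $3$ for $r=1$) the sum of the $1$'s alone (namely $2$ or $3$) can be recombined with an $s_r$ or among themselves to strictly increase $N(r,2;\cdot)$ in every case outside the listed exceptions. Once the exhaustiveness of the small-$n$ check is established via Table \ref{tbl:maxN} and SageMath \cite{code}, the strict inequality $N(r,2;\mu) > N(r,2;\lambda^r)$ contradicts maximality and forces $m_1^r = 0$ in all remaining cases.
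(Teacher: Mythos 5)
Your proposal is correct and takes essentially the same route as the paper: both use Propositions \ref{prop:mult}--\ref{prop:m1} to reduce to the case where $\hat{\lambda}^r$ consists only of $1$'s, then eliminate everything except the listed small partitions via improving substitutions read off Table \ref{tbl:maxN} (the paper's explicit choices being $(1,3,3)\to(7)$ and $(1,1,3)\to(5)$ for $r=0$, and $(1,2,2)\to(5)$, $(1,1)\to(2)$, $(1,1,1,2)\to(5)$ for $r=1$), with the exceptions being exactly the partitions too small to admit such a substitution. One minor remark: your allowance of an additional exceptional part $e$ alongside the $1$'s is superfluous, since Proposition \ref{prop:m1} already rules out any other non-$s_r$ part coexisting with a part of size $1$.
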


\begin{proof}
    Suppose that $m_1^r \geq 1$. By Proposition \ref{prop:m1}, we know that $\hat{\lambda}^0 = (1), (1,1)$ (resp. $\hat{\lambda}^1 = (1), (1,1), (1,1,1)$). Now we add back in the parts of size $s_r$, and observe the following substitutions which yield partitions $\mu$ such that $N(r,2;\mu) > N(r,2;\lambda^r)$:
    \begin{center}
        \begin{tabu}{l l}
            $r = 0$ & $r = 1$ \\
            $(1,3,3) \to (7)$ & $(1,2,2) \to (5)$ \\
            $(1,1,3) \to (5)$ & $(1,1) \to (2)$ \\
            & $(1,1,1,2) \to (5)$. \\
        \end{tabu}
    \end{center}
\end{proof}

We now complete the proof of Theorem \ref{thm:maxN}. For $r = 0$ (resp. $r = 1$), suppose $\lambda^r \in P(n)$ for $n \geq 5$ (resp. $8$). By Proposition \ref{prop:m1} and Proposition \ref{prop:1m}, we know that $\hat{\lambda}^0 = (5), (7)$ (resp. $\hat{\lambda}^1 = (9))$. These partitions cover all the residue classes modulo $s_r$ except for $n \equiv 0 \pmod{s_r}$ exactly once. For such $n$, appending parts of size $s_r$ to these partitions covers each $n$ exactly once and yields the partitions stipulated in Theorem \ref{thm:maxN}. If $n \equiv 0 \pmod{s_r}$, we can deduce that $\lambda^r = (s_r,s_r,\dots,s_r)$ as stated in Theorem \ref{thm:maxN}; the values of $\maxN(r,2;n)$ then follow.

\bibliography{references}
\bibliographystyle{amsplain}

\end{document}